\DeclareFontFamily{OT1}{rsfs}{}
\DeclareFontShape{OT1}{rsfs}{n}{it}{<-> rsfs10}{}
\DeclareMathAlphabet{\mathscr}{OT1}{rsfs}{n}{it}
\newtheorem{theorem}{Theorem}[section]
\newtheorem{lemma}[theorem]{Lemma}
\newtheorem{corollary}[theorem]{Corollary}
\newtheorem{remark}[theorem]{Remark}
\numberwithin{equation}{section}
\renewcommand{\L}{\mathscr{L}}
\newcommand{\Q}{{\mathbb{Q}}}
\renewcommand{\Re}{{\mathfrak{Re}}}
\renewcommand{\Im}{{\mathfrak{Im}}}
\newcommand{\s}{{\sigma}}
\renewcommand{\b}{\beta}
\newcommand{\g}{\gamma}
\newcommand{\myref}[1]{(\ref{#1})}
\def\ds{\displaystyle}
\begin{document}
\title[Explicit zero-free regions for Dedekind Zeta functions]{Explicit zero-free regions for Dedekind Zeta functions}
\author{Habiba Kadiri}
\address{Mathematics and Computer Science Department, University of Lethbridge, 4401 University Drive, Lethbridge, Alberta, T1K 3M4 Canada}
 \email{habiba.kadiri@uleth.ca}
\subjclass[2000]{Primary 11M41; Secondary 11R42, 11M26}
\date{\today}
\keywords{Dedekind zeta function, zero-free regions}
\begin{abstract}
Let $K$ be a number field, $n_K$ its degree, and $d_K$ the absolute value of its discriminant. 
We prove that, if $d_K$ is sufficiently large, then the Dedekind zeta function $\zeta_K(s)$ has no zeros in the region: 
$ \Re s \ge 1- \frac{1}{\log M}$, $|\Im s| \ge 1$, where $\log M= 12.55 \log d_K + 9.69 n_K \log|\Im s| + 3.03 n_K + 58.63$. 
Moreover, it has at most one zero in the region:
$ \Re s \ge 1- \frac{1}{12.74\log d_K}$, $|\Im s| \le 1. $
This zero if it exists is simple and is real.
This argument also  improves a result of Stark by a factor of 2: $\zeta_K(s)$ has at most one zero in the region $ \Re s \ge  1  -   \frac{1}{2\log d_K}$, $|\Im s|   \le     \frac{1}{2\log d_K}. $
\end{abstract}
\maketitle 
\section{Introduction}\noindent
Let $K$ be a number field. Its degree is denoted $n_K = [K : \mathbb{Q}]$, the absolute value of its discriminant is $d_K$, and the Dedekind zeta function associated to $K$ is $\zeta_K(s)$.  
In this article, we prove an explicit classical zero-free region for $\zeta_K(s)$.

Rosser and Schoenfeld published a series of articles devoted to obtaining improved estimates for prime counting functions (see \cite{Ros}, \cite{RS1}, \cite{RS2} and \cite{Sch}),
enlarging de La Vall\'ee Poussin's classical zero-free region in \cite{RS2}.
By employing the global explicit formula for $-\frac{\zeta'}{\zeta}(s)$ and building on an argument of Stechkin \cite{St1}, they proved that $\zeta(s)$ has no zeros in the region
\begin{equation}\label{zfr-RS}
\Re s \ge 1-\frac1{R_1 \log(|\Im s|/17)}
\end{equation}
where $R_1=9.645908801$. 
McCurley applied the same method to Dirichlet $L$-functions. 
He proved in \cite{MC2} that the product $\mathcal{L}_q(s)=\prod_{\chi \bmod q} L(s,\chi)$ has at most a single zero in the region 
\begin{equation}\label{zfr-MC}
\Re s \ge 1-\frac1{R_2\log \max \left(q,q|\Im s|,10\right)}
\end{equation}
where $R_2=9.645908801$.  
The single zero, if it exists is real, simple, and corresponds to a non-principal real character.
The constant is independent of the modulus $q$, and is valid for any value of $q \ge 3$. Observe that the two constants agree: $R_2=R_1$. 

In $1992$, Heath-Brown established an asymptotic result which provides a wider zero-free region for sufficiently large modulus $q$:
\begin{equation}\label{zfr-HB1}
\Re s \ge 1-\frac{1}{R_3 \log q}, \quad |\Im s| \le 1
\end{equation}
where $R_3 =2.8735\ldots $ is smaller than McCurley's constant. 
Heath-Brown's method is different from the one used to obtain \myref{zfr-RS} and \myref{zfr-MC}. 
Some of the main tools in his proof are: a smooth explicit formula for zeros of Dirichlet $L$-functions, Burgess' sub-convexity bound
for Dirichlet $L$-functions, and a local Jensen type formula. 
The previous zero-free region is one of the main ingredients in the proof of Linnik's theorem on the size of the smallest prime $P(a,q)$ in an arithmetic progression ($a$ modulo $q$).
In his groundbreaking article \cite{HB}, Heath-Brown improves drastically all previous results on Linnik's theorem and shows that $P(a,q)\ll q^{5.5+\epsilon}$. 
Recently, Xylouris reduced $R_3$ to $2.2727\ldots $ and Linnik's constant to $5.2$ in his Ph.D. thesis \cite{Xyl}.
In 2000, Ford \cite{For} applied Heath-Brown's argument to the case of the Riemann zeta function. This allowed him to produce an explicit Korobov-Vinogradov zero-free region and to  widen the region in \myref{zfr-RS}  by replacing $R_1$ by $8.463$.

In \cite{Kad1}, the author further reduced the value of $R_1$ to $5.69693$. 
The method used a global explicit formula applied to  a smoothed version of the Riemann zeta-function, together with a generalization of Stechkin's lemma.
This method also improves McCurley's result.
In \cite{Kad2}, the author finds that $R_2=6.50$ is an admissible value for any Dirichlet $L$-function.

In comparison, in the number field setting, there are no analogous theorems to \myref{zfr-RS}, \myref{zfr-MC}, and \myref{zfr-HB1} with explicit constants. 
However, in \cite{Sta} Stark established an explicit result for the Dedekind zeta function in a restricted region.
He established that for any number field $K\not=\Q$, $\zeta_K(s)$ has at most one zero in the region
\begin{equation}\label{zfr-Sta}
\Re s \ge 1-\frac1{4\log d_K},\quad |\Im s| \le \frac1{4\log d_K}.
\end{equation}
If such a zero exists, it is real and simple.
In  Lemma 2.3 of \cite{LMO}, Lagarias, Montgomery, and Odlyzko establish zero-free regions for Hecke $L$-functions. 
They prove that for all finite extensions $K$ of $\Q$ and Hecke characters $\chi$ on $K$, the Hecke $L$-function $L(s,\chi,K)$ has at most one zero in the region
\begin{equation}\label{zfr-LMO1}
\Re s > 1-\frac1{R_4\log A(\chi)}, \quad |\Im s| < \frac1{R_4\log A(\chi)}, 
\end{equation}
where $A(\chi)=d_K N_{K/\Q}f(\chi)$, $f(\chi)$ being the conductor of $\chi$, and where $R_4$ is a positive constant, independent of $K$.
They also extend the region to the whole critical strip and prove that $L(s,\chi,K)$ has no zeros in  the region
\begin{equation}\label{zfr-LMO2}
\Re s > 1-\frac1{R_4(\log A(\chi)+n_K \log(|\gamma|+2))}.
\end{equation}
The classical argument of de La Vall\'ee Poussin is used to prove the above inequalities.
Moreover, \myref{zfr-LMO1} and \myref{zfr-LMO2} play an important role in their proof of a bound for the least prime ideal in the Chebotarev density theorem.
However, the constant $R_4$ is not made explicit. 
In this article we shall apply some of the above mentioned techniques to obtain the following result.
\begin{theorem}\label{ZFR}
Let $d_K$ be sufficiently large. Then $\zeta_K(s)$ has no zero in the region:
\begin{equation}\label{zfrD1}
\Re s \ge 1- \frac{1}{ 12.55 \log d_K + 9.69 ( \log|\Im s|  ) n_K + 3.03 n_K + 58.63} ,\quad |\Im s| \ge 1. 
\end{equation}
Moreover, $\zeta_K(s)$ has at most one zero in the region:
\begin{equation}\label{zfrD2}
\Re s \ge 1- \frac{1}{12.74\log d_K},\quad |\Im s| \le 1. 
\end{equation}
This zero if it exists is simple and is real.
\end{theorem}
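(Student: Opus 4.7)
\emph{Overall strategy.} My plan is to adapt the framework developed by the author in \cite{Kad1,Kad2} for the Riemann zeta function and Dirichlet $L$-functions to the Dedekind setting. The three core ingredients will be: a smoothed version of the classical explicit formula for $-\zeta_K'/\zeta_K(s)$; a non-negative trigonometric polynomial in the style of Stechkin, generalising the classical de la Vall\'ee Poussin identity $3+4\cos\t+\cos 2\t\ge 0$; and explicit estimates for the archimedean factors of the completed zeta function in terms of the degree $n_K$ and the discriminant $d_K$.

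\emph{Explicit formula and positivity.} Let $f$ be a suitable non-negative, compactly supported weight. Starting from
\[
-\frac{\zeta_K'}{\zeta_K}(s)=\sum_{\mathfrak n}\frac{\Lambda_K(\mathfrak n)}{N\mathfrak n^{s}},
\]
I would transform the weighted sum $\sum_{\mathfrak n}\Lambda_K(\mathfrak n)f(\log N\mathfrak n)N\mathfrak n^{-s}$ via the Hadamard factorisation of $\zeta_K$ into a pole term at $s=1$, a sum over non-trivial zeros $\z=\b+i\g$, and archimedean contributions. Combining this with a Stechkin-type polynomial $P(\t)=b_0+2\sum_{k\ge 1}b_k\cos(k\t)\ge 0$ applied at $\t=\g\log N\mathfrak n$ yields, for a putative zero $\z_0=\b_0+i\g_0$, an inequality of schematic shape
\[
\frac{b_1}{\s-\b_0}\le \frac{b_0}{\s-1}+(\text{other zeros})+(\text{archimedean terms}),
\]
and tuning $\s=1+\eta/\log M$ together with the coefficients $b_k$ and the weight $f$ produces the desired lower bound on $1-\b_0$.

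\emph{Archimedean bookkeeping and the narrow regime.} The completed function $\Lambda_K(s)=d_K^{s/2}\Gamma_\R(s)^{r_1}\Gamma_\C(s)^{r_2}\zeta_K(s)$, with $r_1+2r_2=n_K$, contributes terms involving $\tfrac12\log d_K$ together with digamma values $\psi(s/2),\psi(s)$ weighted by $r_1,r_2$. Applying Stirling's formula with explicit remainders, uniformly for $\Re s$ near $1$, is what yields the structure $12.55\log d_K+9.69\, n_K\log|\g|+3.03\,n_K+58.63$ in the denominator of \myref{zfrD1}. For $|\g_0|\le 1$ the multiple-shift trick degenerates, since the shifts $\s+ik\g_0$ remain too close to the pole at $s=1$; in that regime I would instead pair $\z_0$ with its conjugate $\overline{\z_0}$ (and, if necessary, a second real zero) and exploit the positivity $\sum_{\z}\tfrac{\s-\b}{(\s-\b)^2+(t-\g)^2}\ge 0$. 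This two-zero argument simultaneously produces the constant $12.74$ in \myref{zfrD2} and forces any exceptional zero there to be simple and real.

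\emph{Main obstacle.} The principal difficulty will be the sharpness of the archimedean estimates: one must keep Stirling's remainder explicit, track the dependence on the signature $(r_1,r_2)$ uniformly through $r_1+2r_2=n_K$, and balance this against a Hadamard-type bound $\sum_{\z}\Re\tfrac{1}{\s+it-\z}\le \tfrac12\log d_K+\O(n_K\log(|t|+2))$, all without losing too much in the constants. Squeezing the analysis so that the leading coefficients come out as $12.55$, $9.69$, $3.03$ and $12.74$ rather than the larger numbers produced by a na\"\i ve de la Vall\'ee Poussin argument is where the bulk of the delicate work lies; it is precisely the improvement over the Rosser--Schoenfeld/McCurley constants that the smoothed explicit formula and the generalised Stechkin lemma are designed to deliver.
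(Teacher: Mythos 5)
Your plan departs from the paper's proof in one essential respect, and it also contains a misconception about what the constants achieve. The paper does not use a smoothed explicit formula: there is no weight $f$, no transform of $\sum_{\mathfrak n}\Lambda_K(\mathfrak n)f(\log N\mathfrak n)N\mathfrak n^{-s}$. Instead it works directly with the classical unsmoothed explicit formula for $-\zeta_K'/\zeta_K(s)$ (equation (8.3) of Lagarias--Odlyzko), i.e.\ the Rosser--Schoenfeld/McCurley route, not the smoothed route of \cite{Kad1,Kad2} that you invoke. The tool that does all the heavy lifting is not the trigonometric polynomial but Stechkin's \emph{differencing}: one studies
\[
  f(\s,t) = -\Re\Bigl(\tfrac{\zeta_K'}{\zeta_K}(\s+it) - \kappa\,\tfrac{\zeta_K'}{\zeta_K}(\s_1+it)\Bigr), \qquad \kappa = \tfrac1{\sqrt5}, \quad \s_1 = \tfrac12 + \tfrac12\sqrt{1+4\s^2},
\]
which, by the specific choice of $\kappa$ and $\s_1$, is still a Dirichlet series with non-negative coefficients, but whose functional-equation contribution is $\tfrac{1-\kappa}{2}\log d_K$ rather than $\tfrac12\log d_K$. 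That is exactly where the coefficient of $\log d_K$ is cut. Your phrase ``Stechkin-type polynomial'' conflates the de la Vall\'ee Poussin polynomial $\sum a_k\cos(k\t)$ (which both you and the paper use) with this differencing trick; the ``generalised Stechkin lemma'' in the paper is the positivity statement $F(s,z)-\kappa F(s_1,z)\ge0$ (Lemma~\ref{Stechkin}), not a trigonometric identity. Without the differencing, the leading coefficient would be $\tfrac12\sum a_k$ after optimization, landing well above $12.55$.

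Your closing paragraph is also wrong on the numerics: the coefficient $12.55$ in front of $\log d_K$ is \emph{not} an improvement over McCurley's $9.6459$ in front of $\log q$; it is strictly larger, and the paper says so explicitly. The reason is structural: every shifted point $\s+ik\g_0$ at which the explicit formula is evaluated contributes $\tfrac12\log d_K$ from the discriminant factor in the functional equation, including the real point $k=0$; for Dirichlet $L$-functions the $k=0$ term involves $L(s,\chi_0)$ of trivial conductor and carries no $\log q$. So the Dedekind setting is intrinsically worse, and the advance here is over Stark's restricted region \eqref{zfr-Sta} and over the inexplicit constant of Lagarias--Montgomery--Odlyzko in \eqref{zfr-LMO1}, not over McCurley. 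Finally, your single split at $|\g_0|\le1$ is too coarse: the paper needs four subcases for small $\g_0$ (Cases 2--5, separated at heights $d_2/\L$, $d_1/\L$, and $0$), each with its own trigonometric polynomial, its own choice of which zeros to isolate (only $\z_0$; $\z_0$ and $\overline{\z_0}$; two real zeros), and its own optimization of $\s$; this finer bookkeeping, together with the Stechkin differencing, is where the constant $12.74$ actually comes from. The archimedean estimates you emphasize are comparatively routine, being imported from McCurley's digamma bounds.
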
\noindent
An improvement of Stark's result \myref{zfr-Sta} follows from the method leading to \myref{zfrD2}:
\begin{corollary}\label{Stark}
Let $d_K$ be sufficiently large. Then $\zeta_K(s)$ has at most one zero in the region:
\begin{equation}\label{zfrD3}
\Re s \ge 1- \frac{1}{2\log d_K},\quad |\Im s| \le \frac{1}{2\log d_K}.
\end{equation}
Also, we can prove that $\zeta_K(s)$ has at most one zero in the region:
\begin{equation}\label{zfrD4}
\Re s \ge 1- \frac{1}{1.70\log d_K},\quad |\Im s| \le \frac{1}{4\log d_K}.
\end{equation}
This zero if it exists is simple and is real.
\end{corollary}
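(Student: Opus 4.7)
The strategy mirrors the one used for \eqref{zfrD2} in Theorem~\ref{ZFR}: a smoothed Weil-type explicit formula for $\zeta_K$ combined with a Stechkin-type positivity argument, as developed for $\zeta$ and Dirichlet $L$-functions in \cite{Kad1,Kad2}. What is special to the corollary is that the narrow imaginary range $|\Im s|\le 1/(2\log d_K)$ (resp.\ $1/(4\log d_K)$) puts all potential exceptional zeros essentially on the real line at the scale $1/\log d_K$ we probe, so the underlying smoothing function can be re-optimized much more aggressively than in the general regime $|\Im s|\le 1$.

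First I would fix a non-negative even Schwartz function $F$ and apply the explicit formula to obtain an identity schematically of the form
\begin{equation*}
\sum_\rho \widehat F(\rho)\;=\;\widehat F(0)+\widehat F(1)-\tfrac12 F(0)\log d_K-\mathcal A_K(F)-\mathcal P_K(F),
\end{equation*}
where $\mathcal A_K(F)$ is the archimedean piece (bounded by a constant times $n_K$) and $\mathcal P_K(F)\ge0$ is the prime sum coming from the Euler product. Assuming two zeros $\rho_1,\rho_2$ in the box (counted with multiplicity, pairing any non-real zero with its conjugate), I would retain their contributions to the left-hand side, drop the remaining zeros and $\mathcal P_K(F)$ by positivity, and absorb $\mathcal A_K(F)$ via the Minkowski bound $n_K\ll\log d_K$ into an $o(\log d_K)$ error. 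Under the normalization $\s=1+\lambda/\log d_K$ and $\b=1-\mu/\log d_K$, the resulting scalar inequality takes the form $\kappa(F)\,\mu\le R(F,\lambda)+o(1)$ for explicit positive $\kappa(F)$ and $R(F,\lambda)$.

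Next I would optimize over $F$ along the Stechkin family used in \cite{Kad1,Kad2}, reparametrized to exploit the narrow $|\Im s|$-range. With $|\Im s|\le 1/(2\log d_K)$ the optimum should hit $\mu\ge\tfrac12$, which is \eqref{zfrD3}; tightening to $|\Im s|\le 1/(4\log d_K)$ improves the constant to $\mu\ge 1/1.70$, which is \eqref{zfrD4}. Reality and simplicity of the surviving zero follow at once: a non-real zero would drag its conjugate into the (symmetric) box and a multiple zero counts twice, both ruled out by the same inequality.

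The main obstacle is the explicit numerical bookkeeping needed to recover the announced constants $2$ and $1.70$: the archimedean term $\mathcal A_K(F)$, the small loss incurred by replacing $\widehat F(\rho)$ by $\widehat F(\b)$ (harmless because $|\g|\ll\s-\b$ in the regime of the corollary), and the Stechkin coefficients must all be tracked through a single one-parameter optimization in $\lambda$ whose minimum must hit these constants precisely. The hypothesis ``$d_K$ sufficiently large'' is exactly what allows the $O(n_K)+O(1)$ remainders to be absorbed into $o(\log d_K)$ in the final inequality.
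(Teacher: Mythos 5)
Your plan takes a genuinely different route from the paper, and it is worth being explicit about the divergence.

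The paper proves Corollary~\ref{Stark} as a by-product of its Case~4 analysis (Section~\ref{lower-region}) together with Case~5 (Section~\ref{two-real}). The engine is not a smoothed Weil-type explicit formula with a test function $F$: the paper deliberately avoids smoothing (see the discussion after Corollary~\ref{Stark} in the introduction explaining why the Heath-Brown/Jensen and smoothed approaches are hard to control in the number-field setting) and instead applies the \emph{un-smoothed} classical explicit formula from \cite{LO} to Stechkin's differenced function $f(\sigma,t)=-\Re\bigl(\tfrac{\zeta_K'}{\zeta_K}(s)-\kappa\tfrac{\zeta_K'}{\zeta_K}(s_1)\bigr)$ with $\kappa=1/\sqrt5$, $\sigma_1=\tfrac12+\tfrac12\sqrt{1+4\sigma^2}$. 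In Case~4 one isolates both $\varrho_0$ and $\overline{\varrho_0}$ from the zero sum (equation \eqref{eq-isolate8}), bounds the polar term by \eqref{pole0} and the $\gamma_K$ term by \eqref{gamma0}, normalizes $\sigma-1=r/\L$, $1-\beta_0=c/\L$, $\gamma_0\le d_1/\L$, and arrives at a single scalar inequality
\[
0\le \frac1r-\frac{2(r+c)}{(r+c)^2+d_1^2}+\frac{1-\kappa}{2}.
\]
The Corollary is obtained by rerunning exactly this computation with $d_1=1/4$ (yielding $1/c=1.6918\ldots$, hence \eqref{zfrD4}) and $d_1\approx 1/2$ (yielding $1/c=1.9997\ldots$, hence \eqref{zfrD3}); real zeros are handled by Case~5. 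So there is no separate kernel to re-optimize: the only tuning knobs are $r$ and the geometric threshold $d_1$.

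Two concrete points where your plan diverges unhelpfully. First, invoking Minkowski's bound $n_K\ll\log d_K$ to absorb the archimedean term is both unnecessary and lossy: in the regime of the Corollary the $\gamma_K$-contribution carries the coefficient $\bigl(-\tfrac{(1-\kappa)\log\pi}{2}+d(0)\bigr)n_K$, which is \emph{negative} and can simply be dropped, with no $o(\log d_K)$ error incurred and no hypothesis on $n_K$ needed beyond what already holds. Second, the framing ``optimize over $F$ along the Stechkin family from \cite{Kad1,Kad2}'' imports precisely the smoothing machinery the present paper argues is problematic here (the error $2n_K\log(\log d_K/n_K)+O(n_K)$ need not be $o(\log d_K)$); your $o(1)$ absorption step would have to be justified rather than assumed, and it is exactly this obstacle the paper circumvents by staying un-smoothed. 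Your high-level structural remarks (conjugate pairing and multiplicity counting force ``at most one'' and force the survivor to be real and simple) do match the paper's logic, but the quantitative core of your proposal is not the one the paper uses and would not obviously reproduce the constants $2$ and $1.70$.
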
\noindent
Note that the above theorems can be made completely explicit for any value of $\log d_K$.

Our proof does not make use of Heath-Brown's version of Jensen's formula.  We now explain why it appears difficult 
to apply his approach to the number field setting. He proved that for $\chi$ a non-principal Dirichlet character modulo $q$,  $ \sigma $ close to $1$, and for any 
$\epsilon >0$, there exists a $\delta_{\epsilon} >0$ such that 
\[
    - \Re \frac{L'}{L}(\sigma ,\chi) \le - \sum_{|1-\varrho| \le \delta_{\epsilon} }  \Re \frac{1}{\s-\varrho} + 
    \Big( \frac{\phi}{2} + \epsilon \Big) \log q.
\]
Here $\phi$ is a constant associated to an upper bound for $L(s,\chi)$.   The convexity bound
yields $\phi= \frac{1}{2}$ and Burgess' sub-convexity estimate yields $\phi=\frac{1}{3}$. 
In comparison, this method applied in the context of number fields leads to the following inequality:
\[
    - \Re \frac{\zeta_K'}{\zeta_K}(\sigma ) \le   \frac1{\s-1} 
    + \Big(\frac14+\epsilon\Big)\log d_K 
    + 2 n_K \log \Big( \frac{\log d_K}{n_K} \Big) + \mathcal{O}(n_K).  
\]
 (see Lemma 4 in Li's article \cite{Li} for a reference).
The problem here is that the error terms may become larger than the main term which is of size $\log d_K$. 
Note that the coefficient $\frac14$ follows from the convexity bound. Sub-convexity bounds for number fields only have been proven for some special cases, such as cubic extensions (see pp. 54-55 of \cite{Ven} for an overview of known results). 
On the other hand, Stechkin's argument leads to the inequality: 
\[
    - \Re \frac{\zeta_K'}{\zeta_K}(\sigma ) \le   \frac1{\s-1}  - \sum_{\varrho}  \Re \frac{1}{\s-\varrho}  + \Big(\frac{1-\frac1{\sqrt5}}2+\epsilon \Big)\log d_K ,
\]
where $\frac{1-\frac1{\sqrt5}}2 = 0.27639\ldots $.
Moreover, the above is valid for any number field $K$.

The Dedekind zeta function is similar to the Riemann zeta function as it also has a simple pole at $s=1$.
However, a major difference is that its zeros can lie very close to the the real axis. From this perspective, it behaves similarly to a Dirichlet $L$-function. 
As a consequence, our argument is closer to McCurley's than to Rosser and Schoenfeld's.
Note that our coefficient of $\log d_K$ is larger than their coefficient of $\log q$ (namely $12.55$ instead of $9.645908801$).
This is due to the fact that the zero-free region proof compares values of $\zeta_K(s)$ at different points $s$ close to the $1$-line. For each of these points, a contribution of $\log d_K$ arises, even when $s$ is real. 
On the other hand, this does not occur in the case of Dirichlet $L$-functions: there is no contribution of $\log q$ from $L(s,\chi)$, when $s$ and $\chi$ are real.

One of the interests of Theorem \ref{ZFR} is its application to the problem of finding an explicit upper bound for the least prime ideal in the Chebotarev density theorem. 
Given a Galois extension of number fields $E/K$ with group $G$ and a conjugacy class $C \subset G$, there exists an unramified prime 
ideal $\mathfrak{p}$ of degree one such that $\sigma_{\mathfrak{p}}=C$ and $\mathbb{N} \mathfrak{p} \le d_K^{C_0}$ for an explicit constant $C_0 >0$.
\section{Notation and preliminary lemmas}\label{def-not}\label{preliminary-lemmas}
Let $K$ be a number field with ring of integers $\mathcal{O}_K$. 

The Dedekind zeta function of $K$ is  
\[
   \zeta_K(s) =
\sum_{\substack{\mathfrak{a} \subset  \mathcal{O}_K \\ \mathfrak{a} \ne 0}}
\frac{1}{(\mathbb{N} \mathfrak{a})^s},\ \text{ for }\ \Re (s) >1.
\]
It possesses the Euler product
\[
  \zeta_K(s) = \prod_{\mathfrak{p}} (1-(N \mathfrak{p})^{-s})^{-1}
\]
where $\mathfrak{p}$ ranges over all prime ideals in $\mathcal{O}_K$ and
$\Re(s) >1$.
It is convenient to consider the completed zeta function
\begin{align*}  \label{def-xi}
\xi_K(s) & =s(s-1) {d_K}^{s/2}    \gamma_K(s)\zeta_K(s)
   \ ,   \\
\gamma_K(s)  & = 2^{r_2}( \pi^{n_K}2^{2r_2} )^{-s/2} \Gamma(s/2)^{r_1}
\Gamma(s)^{r_2}
\end{align*}
where $r_1$ and $r_2$ are the number of real and complex places in $K$.
The advantage of $\xi_K$ is that it is an entire function which satisfies the functional equation:
\begin{equation*} 
  \label{eq:xife}
  \xi_K(s)=\xi_K(1-s).
\end{equation*}
By the duplication formula $\Gamma(s)=	\frac{2^{s-1}}{\sqrt{\pi}}\Gamma(\frac{s}{2})\Gamma(\frac{s+1}{2})$, it follows that
\begin{equation}\label{eq:gamK}
  \gamma_K(s) = \pi^{-\frac{b(s+1)}{2}} \Gamma\left(\tfrac{s+1}{2}\right)^{b} \pi^{-\frac{as}{2}} \Gamma\left(\tfrac{s}{2}\right)^{a} ,
\end{equation}
where $a,b$ are integers which satisfy $a+b=n_K$. 

Let $\sigma>1$ and $t$ real.
We shall use the following notation and assumptions throughout the rest of the article:
\begin{eqnarray}\label{eq:cond} 
& \L= \log d_K,\qquad 
& \kappa = \frac1{\sqrt5}, \qquad \nonumber
\\ & 1< \s < 1.15, \qquad
&
\sigma_1=\frac12+\frac12\sqrt{1+4\sigma^2}, \qquad\nonumber
\\ &
s=\sigma + it,\qquad
& s_1=\sigma_1 + it.\qquad 
\end{eqnarray}
Let $\varrho_0=\beta_0 + i\gamma_0$.
Note that, by symmetry of the zeros of $\zeta_K(s)$, it suffices to consider $\g_0\ge0$.
The classical proof of the zero-free region studies the logarithmic derivative of the zeta function.
Lagarias et al. \cite{LMO} consider
$
-\Re \frac{\zeta_K'}{\zeta_K}(s)
$
and follow de La Vall\'ee Poussin's argument.
Instead, we study the differenced function
\begin{equation}\label{def-fL} 
f(\s,t) = -\Re \left( \frac{\zeta_K'}{\zeta_K}(s)-\kappa \frac{\zeta_K'}{\zeta_K}(s_{1})\right),
\end{equation}
as introduced by Stechkin. 
Observe that, for $\s>1$,
\begin{equation}\label{def-fL1} 
f(\s,t) = \sum_{\substack{\mathfrak{a} \subset  \mathcal{O}_K \\ \mathfrak{a} \ne 0}} \frac{\Lambda(\mathfrak{a})}{(\mathbb{N} \mathfrak{a})^{\s}} \left(1-\frac{\kappa }{(\mathbb{N} \mathfrak{a})^{\s_1-\s}}\right) \cos \left( t\log  \mathbb{N} \mathfrak{a} \right).
\end{equation}
\subsection{Setting up the argument}\label{Setting-up-argument}
\subsubsection{First ingredient: a trigonometric inequality. }
Let $P$ be a non-negative trigonometric polynomial of degree $n$ of the form
\[
P(\theta)  = \sum_{k=0}^n a_{k} \cos(k\theta),
\]
where all the $a_k$'s are positive.  
For example, de La Vall\'ee Poussin used
\[ 2(1+\cos \theta)^2 = 3+4\cos\theta+\cos(2\theta).\]
Later, higher degree polynomials were explored.
For example, Rosser and Schoenfeld in \cite{RS2} for $\zeta(s)$, and then McCurley in \cite{MC1} for $\L_q(s)$ used:
\begin{align*}
&P(\theta) = 8(0.9126+\cos \theta)^2(0.2766+\cos\theta)^2 = \sum_{k=0}^4 a_k \cos(k\theta),\\
\text{ with }
& a_{0} = 11.18593553 ,\  a_{1} = 19.07334401,\   a_{2} = 11.67618784,\  a_{3} = 4.7568,\  a_{4} = 1.
\end{align*}
Our choice of $P$ depends of the size of the imaginary part $\g_0$.
For example, for $\varrho_0$ real, we only consider
\[P(\theta) = 1.\]
Taking $\theta = t\log \mathbb{N} \mathfrak{a}$, we combine the trigonometric polynomial with \eqref{def-fL1} and define
\begin{equation}\label{def-S}
S(\s,\g_0) =\sum_{k=0}^n a_{k} f(\s,k\g_0)  = \sum_{\substack{\mathfrak{a} \subset  \mathcal{O}_K \\ \mathfrak{a} \ne 0}} \frac{\Lambda( \mathfrak{a})}{(\mathbb{N} \mathfrak{a})^{\s}} \left(1-\frac{\kappa }{(\mathbb{N} \mathfrak{a})^{\s_1-\s}}\right) P\left(  \g_0 \log  \mathbb{N} \mathfrak{a} \right) .
\end{equation}
Thanks to the choice of $\s_1$ and $\kappa$ as in \eqref{eq:cond}, we have
\[
1-\frac{\kappa }{(\mathbb{N} \mathfrak{a})^{\s_1-\s}} \ge 0,\ \text{ for all non-zero ideals}.
\]
Together with the non-negativity of $P$, we obtain
\begin{equation}\label{positive}
 S(\s,\g_0) \ge 0.
\end{equation}
\subsubsection{Second ingredient: an explicit formula. }
We recall the explicit formula for Dedekind Zeta functions (see equation (8.3) of \cite{LO}): 
\[
-\Re \frac{\zeta_K'}{\zeta_K}(s) = 
-\sum_{\varrho} \Re \frac1{s-\varrho}
+ \frac{1}2\log d_K
+\Re \frac1s + \Re \frac1{s-1}
+ \Re\frac{\gamma'_K}{\gamma_K}(s),
\]
where $\varrho$ runs through the non-trivial zeros of $\zeta_K$. 
It follows for $f(\s,t)$ given by \eqref{def-fL} that
 \begin{multline}\label{Stechkin-explicit-formula-0}
f(\s,t) =  
-\sum_{\varrho} \Re \left(\frac1{s-\varrho}-\kappa\frac1{s_{1}-\varrho}\right)  
+ \frac{1-\kappa}2\log d_K
\\+\Re\left(\frac1s + \frac1{s-1}-\frac{\kappa}{s_{1}} - \frac{\kappa}{s_{1}-1}\right)  + \Re\left( \frac{\gamma'_K}{\gamma_K}(s) -\kappa \frac{\gamma'_K}{\gamma_K}(s_{1}) \right).
\end{multline}
To simplify notation, we set
\begin{equation*}\label{def-F}
 F(s,z) = \Re \left( \frac1{s-z} +\frac1{s-1+\overline{z}} \right).
\end{equation*}
Using the symmetry of the zeros with respect to the $1/2$-line, we have
\begin{equation}\label{sum-zeros-0}
-\sum_{\varrho} \Re \left(\frac1{s-\varrho}-\kappa\frac1{s_{1}-\varrho}\right)
=  -\sideset{}{'} \sum_{\beta \ge \frac{1}{2}}\left(F(s,{\varrho})-\kappa F(s_1,{\varrho})\right),
 \end{equation}
where
$\ds{\sideset{}{'} \sum_{\beta \ge \frac{1}{2}} = \frac12 \sum_{\Re \varrho=1/2} + \sum_{1/2<\Re \varrho\le 1}}$. 
It follows that
\begin{multline}
\label{eq-trig1}
f(\s,k\g_0) = -\sideset{}{'} \sum_{\beta \ge \frac{1}{2}} \left(F(\s+ik\g_0,{\varrho})-\kappa F(\s_1+ik\g_0,{\varrho})\right)  
+ \frac{1-\kappa}2\log d_K
\\+F(\s+ik\g_0,1)-\kappa F(\s_1+ik\g_0,1)  
+ \Re\big( \frac{\gamma'_K}{\gamma_K}(\s+ik\g_0) -\kappa \frac{\gamma'_K}{\gamma_K}(\s_{1}+ik\g_0) \big).
\end{multline} 
Now \eqref{positive} becomes
\begin{equation}\label{positive-explicit-formula}
S_1(\s,\g_0)+S_2 +S_3(\s,\g_0)+S_4(\s,\g_0) \ge 0
\end{equation} 
where
\begin{align}
&S_{1}(\s,\g_0) =  - \sum_{k=0}^n a_k \sideset{}{'} \sum_{\beta \ge \frac{1}{2}} \left(F(\s+ik\g_0,{\varrho})-\kappa F(\s_1+ik\g_0,{\varrho})\right) , \label{def-S1}\\
&S_{2} =  \frac{1-\kappa}2(\sum_{k=0}^n a_k )\, \log d_K ,\\
&S_{3}(\s,\g_0) = \sum_{k=0}^n a_k  \left( F(\s+ik\g_0,1)-\kappa F(\s_1+ik\g_0,1) \right) ,\label{def-S3}\\
&S_{4}(\s,\g_0) =  \sum_{k=0}^n a_k \Re\left( \frac{\gamma'_K}{\gamma_K}(\s+ik\g_0) -\kappa \frac{\gamma'_K}{\gamma_K}(\s_{1}+ik\g_0) \right).\label{def-S4}
\end{align}
It remains to bound each of the $S_i$'s so as to exhibit $\b_0$ and deduce from \eqref{positive-explicit-formula} an upper  bound for it.
Observe that for $\Re z \le1$, $\Re(\s_1+ik\g_0-z)$ is large enough, implying that the terms $F(\s_1+ik\g_0,z)$ are insignificant.
On the other hand, Stechkin's trick reduces the coefficient of $\log d_K $ from $\frac12$ to $\frac{1-\kappa}2$.
We choose the parameter $\sigma$ such that $\s-1$ and $\s-\b_0$ are both of size $\frac1{\L}$.
We split our argument and evaluate each of the $S_i$'s for:
\begin{itemize}
 \item[Case 1:] $\g_0>1$,
 \item[Case 2:] $\frac{d_2}{\L} < \g_0 \le 1$,
 \item[Case 3:] $\frac{d_1}{\L}<\g_0 \le \frac{d_2}{\L} $,
 \item[Case 4:] $0 < \g_0\le \frac{d_1}{\L}$,
 \item[Case 5:] $\gamma_0=0$.
In this case, we consider $\beta_1$ and $\beta_2$ two real zeros satisfying $\b_1 \le \b_2$.
It is possible to prove an upper bound for $\b_1$, and thus establish a region free of zeros, with the exception of $\b_2$.
\end{itemize}
Here $d_1$ and $d_2$ are positive constants chosen to make the zero-free regions as wide as possible.
For each case, we make a specific choice for the trigonometric coefficients $a_k$.
\subsubsection{Bounding the sum over the zeros $S_1$. }
In de La Vall\'ee Poussin's argument, he makes use of the positivity condition 
\[\Re \frac1{\s+ik\g_0-\varrho} \ge 0, \ \text{for all non-trivial zeros } \varrho \text{ and for all }   \s>1.\]
Later, Stechkin showed (see Lemma \ref{Stechkin} below) that 
\[F(\s+ik\g_0,{\varrho})-\kappa F(\s_1+ik\g_0,{\varrho}) \ge 0, \ \text{for all non-trivial zeros}\ \varrho.\]
Moreover, $\kappa=1/\sqrt5$ is the largest value such that the inequality holds.
Observe that for the zeros $\varrho$ where $|\s+ik\g_0-\varrho|$ is small, then $\Re \frac1{\s+ik\g_0-\varrho} $ is a large positive term.
We retain these zeros in the sum \eqref{def-S1} and we discard the other ones by using Stechkin's Lemma.
\\
Cases 1, 2, and 3: We have $\g_0 \gg \s-\b_0$. We isolate $\varrho_0=\beta_0+i\gamma_0$ only for the $k=1$ term:
\[
S_1(\s,\g_0) \le -\frac{a_1}{\s-\beta_0} + \mathcal{O}(1).
\]
Case 4:
We isolate both zeros $\varrho_0$ and $\overline{\varrho_0}$:
\[
S_1(\s,0) \le - \frac{2(1+o(1))\sum_{k=0}^n a_k}{\s-\beta_0} + \mathcal{O}(1).
\]
Case 5: 
We isolate both zeros $\b_1$ and $\b_2$:
\[
S_1(\s,0) \le - \frac{2(1+o(1))}{\s-\beta_1} + \mathcal{O}(1).
\]
\subsubsection{Bounding the polar terms $S_3$. }
Note that $F(\s+ik\g_0,1)-\kappa F(\s_1+ik\g_0,1)$ is essentially of size $\Re \frac{1}{\s-1+ik\g_0}$. 
\\
Cases 1 and 2:
We have $\g_0 \gg\s-1$. Thus $F(\s+ik\g_0,1)-\kappa F(\s_1+ik\g_0,1) \ll 1$ for all $k\not=0$ and
\[
S_3(\s,\g_0)  \le  \frac{a_0}{\s-1} + \mathcal{O}(1).
\]
Cases 3, 4, and 5:
Since $\g_0 \ll \s-1$, $F(\s+ik\g_0,1)-\kappa F(\s_1+ik\g_0,1) \ll  \frac{1}{\s-1}$ for all $k$ and
\[
S_3(\s,\g_0) \le \frac{(1+o(1)) \sum_{k=0}^n a_k}{\s-1} + \mathcal{O}(1).
\]
\subsubsection{Bounding the $\gamma_K$ terms $S_4$. }
An analysis of $\psi(x)=\frac{\Gamma'}{\Gamma}(x)$ together with the definition \eqref{eq:gamK} of $\frac{\gamma_K'}{\gamma_K}$ gives 
\[
\sum_{k=0}^n a_k \Re\frac{\gamma'_K}{\gamma_K}(\s+ik\g_0) 
\le
\begin{cases}
\frac{(1+o(1))(1-\kappa)}{2}(\log \gamma_0) n_K \text{  in Case 1},\\
\mathcal{O} (n_K)  \text{  in Case } 2, 3, 4, 5.
\end{cases}
\]
\subsubsection{Conclusion.} 
We deduce from the above bounds an inequality depending on $\beta_0$ (respectively $\b_1$), $\g_0, d_K, n_K$, and $\s$.
We choose $\s$ so as to obtain the smallest upper bound possible for $\b_0$ and $\b_1$.

The following sections establish in complete detail the results mentioned in section \ref{Setting-up-argument}.
\subsection{Preliminary Lemma about the zero terms.}
We define $s_1(\s,\g_0,k)$ to be the $k$-th summand of $S_1(\s,\g_0)$:
\[
s_1(\s,\g_0,k)=
-\sideset{}{'} \sum_{\beta \ge \frac{1}{2}} \left(F(\s+ik\g_0,\varrho)-\kappa F(\s_1+ik\g_0,{\varrho})\right).
\]
We employ the following lemma to establish a bound for it.
\begin{lemma}[Stechkin - \cite{St1}]\label{Stechkin}
Let $s=\s+it$ with $\sigma>1$.
If $0<\Re z  <1$, then
\begin{equation}\label{eq-S1}
F(s,z)-\kappa F(s_1,z) \ge 0. 
\end{equation}
If $\Im z = t$ and $1/2 \le \Re z <1$, then
\begin{equation}\label{eq-S2}
 \Re \frac1{s-1+\overline{z}}-\kappa F(s_1,z) \ge 0. 
\end{equation}
\end{lemma}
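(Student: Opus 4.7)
My plan is to reduce both parts to elementary inequalities via explicit parametrization. Writing $z = \beta + i\gamma$, set
$u = \sigma - \beta$, $v = \sigma - 1 + \beta$, $u_1 = \sigma_1 - \beta$, $v_1 = \sigma_1 - 1 + \beta$, and $r = t - \gamma$.
All four of $u, v, u_1, v_1$ are strictly positive since $\sigma > 1 > \beta > 0$ and $\sigma_1 > \sigma$. A direct computation gives
\[
F(s,z) \;=\; \frac{u}{u^2+r^2} + \frac{v}{v^2+r^2} \;=\; \frac{(u+v)(uv+r^2)}{(u^2+r^2)(v^2+r^2)},
\]
and similarly for $F(s_1,z)$. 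The defining relation $\sigma_1(\sigma_1-1) = \sigma^2$ yields the clean identities $u+v = 2\sigma-1$, $u_1+v_1 = \sqrt{1+4\sigma^2}$, $uv = \sigma(\sigma-1) + \beta(1-\beta)$, and $u_1v_1 = \sigma^2 + \beta(1-\beta)$, so in particular $u_1v_1 - uv = \sigma$ and both $F$-values depend on $\beta$ only through $p := \beta(1-\beta) \in (0,1/4]$.

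For part (i), clearing the positive common denominator $(u^2+R)(v^2+R)(u_1^2+R)(v_1^2+R)$ (with $R = r^2$) converts the inequality $F(s,z) - \kappa F(s_1,z) \geq 0$ into a polynomial inequality $P(R;\sigma,p) \geq 0$ on $R \geq 0$, where $P$ has degree three in $R$. The leading coefficient is $(u+v) - \kappa(u_1+v_1) = (2\sigma-1) - \kappa\sqrt{1+4\sigma^2}$, which is nonnegative since
\[
5(2\sigma-1)^2 - (1+4\sigma^2) = 4(4\sigma-1)(\sigma-1) \geq 0 \qquad (\sigma \geq 1),
\]
with equality at $\sigma = 1$; this identity is what pins $\kappa = 1/\sqrt{5}$ as the largest admissible constant. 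The constant term $P(0) = uv \cdot u_1v_1 \cdot \bigl[(u+v)(u_1v_1) - \kappa(u_1+v_1)(uv)\bigr]$ is handled by observing the bracket is linear-increasing in $p$ (with slope controlled by the same sharpness identity) and positive at $p=0$, which after squaring reduces to $16\sigma^4 - 12\sigma^3 + 2\sigma - 1 \geq 0$ for $\sigma \geq 1$. The coefficients of $R$ and $R^2$ must then be collected using $u_1 v_1 = uv + \sigma$ and shown to have the right signs to guarantee $P(R) \geq 0$ throughout.

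For part (ii), the hypothesis $\Im z = t$ forces $r = 0$, and the claim collapses to the algebraic statement
\[
\frac{1}{\sigma-1+\beta} \;\geq\; \kappa \cdot \frac{\sqrt{1+4\sigma^2}}{\sigma^2+\beta(1-\beta)},
\qquad \tfrac{1}{2} \leq \beta < 1.
\]
A short derivative calculation shows that $\beta \mapsto \bigl(\sigma^2 + \beta(1-\beta)\bigr)/(\sigma-1+\beta)$ is strictly decreasing on $[1/2, 1)$ with limit $\sigma$ as $\beta \to 1^-$, so the worst case reduces to $\sigma \geq \kappa\sqrt{1+4\sigma^2}$, equivalent to $\sigma^2 \geq 1$, which holds since $\sigma > 1$. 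The main obstacle is thus the polynomial bookkeeping in part (i): arranging the cubic $P(R;\sigma,p)$ so its nonnegativity becomes structural --- ideally a sum-of-squares decomposition exposed by $u+v = 2\sigma-1$ and $u_1v_1 - uv = \sigma$, with the single sharpness identity $5(2\sigma-1)^2 - (1+4\sigma^2) = 4(4\sigma-1)(\sigma-1)$ applied at each appearance --- rather than a brute sign check on three separate coefficients.
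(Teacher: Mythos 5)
The paper does not supply a proof of Lemma~\ref{Stechkin}: it is attributed to Stechkin~\cite{St1} and used as a black box, so there is no internal argument to compare against. Evaluated on its own terms, your parametrization $u=\sigma-\beta$, $v=\sigma-1+\beta$, $u_1=\sigma_1-\beta$, $v_1=\sigma_1-1+\beta$, $r=t-\gamma$ is correct, and the identities $u+v=2\sigma-1$, $u_1+v_1=\sqrt{1+4\sigma^2}$, $u_1v_1-uv=\sigma_1(\sigma_1-1)-\sigma(\sigma-1)=\sigma$ all check out, as does the factorization $5(2\sigma-1)^2-(1+4\sigma^2)=4(4\sigma-1)(\sigma-1)$. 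Your treatment of part~\eqref{eq-S2} is complete and correct: at $r=0$ the claim reduces to $\sigma^2+\beta(1-\beta)\ge\kappa\sqrt{1+4\sigma^2}\,(\sigma-1+\beta)$, the left ratio is strictly decreasing in $\beta$ on $[1/2,1)$ with infimum $\sigma$, and $\sigma\ge\kappa\sqrt{1+4\sigma^2}\Leftrightarrow\sigma^2\ge1$.

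The genuine gap is in part~\eqref{eq-S1}. After clearing denominators you must show that a cubic $P(R)$ in $R=r^2$ is nonnegative on $[0,\infty)$, and you verify only the two boundary coefficients: the leading coefficient $(2\sigma-1)-\kappa\sqrt{1+4\sigma^2}\ge0$ and the constant term $P(0)\ge0$ (via the quartic $16\sigma^4-12\sigma^3+2\sigma-1\ge0$). These two facts alone do \emph{not} force $P(R)\ge0$ on $[0,\infty)$; a cubic with positive leading coefficient and positive value at the origin can still dip negative for intermediate $R$ (e.g.\ $R^3-10R^2+R+1$). You flag the coefficients of $R$ and $R^2$ as ``to be collected and shown to have the right signs,'' and then call the remaining work ``polynomial bookkeeping'' while hoping for a sum-of-squares structure --- but neither is carried out, and this is precisely the heart of the lemma (it is the step where $\kappa=1/\sqrt5$ is used in full strength). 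Until the nonnegativity of $P$ is actually established --- either by proving all four coefficients are nonnegative, or by some other structural argument --- the proof of part~\eqref{eq-S1} is not done. Numerical spot-checks suggest the all-coefficients-nonnegative route is plausible, so the plan is not wrong, but a proof must close it.
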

For the rest of the article, we consider $\varrho_0=\b_0+i\g_0$ a non-trivial zero of $\zeta_K$. We assume
\begin{equation}\label{eq:cond-rho0}
\b_0 \ge 0.85\  \text{ and }\ \gamma_0\ge 0. 
\end{equation}
Note that otherwise, the zero-free region $\Re s > 0.85$ is established. 
When $k=1$, we isolate $\varrho_0$ from the sum in \myref{sum-zeros-0}. Together with \myref{eq-S1} and  \myref{eq-S2}, we obtain 
\begin{multline*}
s_1(\s,\g_0,1)
\le - \left(F(\s+i\g_0,\varrho_0)-\kappa F(\s_1+i\g_0,\varrho_0)\right)
\\=- \left(\Re\frac1{\s+i\g_0-\varrho_0}+\Re\frac1{\s+i\g_0-1+\overline{\varrho_0}}-\kappa F(\s_1+i\g_0,\varrho_0)\right)
\le - \frac1{\s-\b_0}.
\end{multline*}
When $k=0,2,3,4$, we consider various cases.\\
If $ \g_0>1$ (Case 1), we use \myref{eq-S1} for all zeros:
\[
s_1(\s,\g_0,k) \le 0.
\]
For $\g_0$  as in Case 2, we apply \myref{eq-S1} except for $\varrho_0$:
\begin{equation}\label{eq-isolate2}
s_1(\s,\g_0,k)  \le - \left(F(\s+ik\g_0,\b_0+i\g_0)-\kappa F(\s_1+ik\g_0,\b_0+i\g_0)\right).
\end{equation}
For $\g_0$  as in Case 3, we apply \myref{eq-S1} except for $\varrho_0$ and $\overline{\varrho_0}$:
\begin{multline}\label{eq-isolate31}
s_1(\s,\g_0,k) \le - \left(F(\s+i\g_0,\b_0+i\g_0)-\kappa F(\s_1+i\g_0,\b_0+i\g_0)\right)
\\-\left(F(\s+i\g_0,\b_0-i\g_0)-\kappa F(\s_1+i\g_0,\b_0-i\g_0)\right)  .
\end{multline}
We observe that, for $x$ real, $F(x,\varrho_0) = F(x,\overline{\varrho_0})$.
For Cases 4 and 5, \myref{eq-isolate31} becomes
\begin{equation}\label{eq-isolate32}
s_1(\s,0,k) \le - 2\left(F(\s,\b_0+i\g_0)-\kappa F(\s_1,\b_0+i\g_0)\right).
\end{equation}
It remains to bound $-\left( F(\s+ik\g_0,\b_0\pm i\g_0)- \kappa F(\s_1+ik\g_0,\b_0 \pm i\g_0)\right)$ in \myref{eq-isolate2}, \myref{eq-isolate31}, and \myref{eq-isolate32} for $k=0,1,2,3,4$ and $\g_0\le 1$.
The following elementary lemma may be used for this.
\begin{lemma} \label{gbds}
For $a, b, c >0$, we define 
\begin{equation*}
  \label{eq:g}
  g(a,b,c;x) = \kappa \Big( 
  \frac{a}{a^2+x^2} + \frac{b}{b^2+x^2}
  \Big) - \frac{c}{c^2+x^2}.
\end{equation*}
Let $a_0 = \frac{\sqrt{5}-1}{2}, b_0 = \frac{1+\sqrt{5}}{2}$, and $c_0 =1$.  \\
(i) Let $g_{0} = -0.121585107$. Then the inequality 
\[
  g_{0}  \le  g(a_0,b_0,c_0;x) \le 0  
\]
is valid for all $x \in \mathbb{R}$. \\
(ii) Let $a,b,c >0$ and let $0 < \epsilon < \epsilon_0$.  
If there exist constants $m_1,m_2,m_3$ such that $|a-a_0| < m_1 \epsilon $, $|b-b_0| <  m_2 \epsilon$, and $|c-c_0| <  m_3 \epsilon$,
then 
\[
    g_{0} -m_0 \epsilon \le  g(a,b,c;x) \le m_0 \epsilon, 
\]
where \[m_0=
\frac{ \kappa m_1}{(a_0 -m_1 \epsilon_0)^2}
+ \frac{ \kappa m_2}{(b_0 -m_2 \epsilon_0)^2}
 + \frac{m_3}{(c_0 - m_3 \epsilon_0)^2} 
.\]
\end{lemma}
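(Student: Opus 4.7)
The plan is to treat parts (i) and (ii) separately. Part (i) reduces to a one-variable calculus problem once the algebraic identities satisfied by $a_0$ and $b_0$ are exploited, and part (ii) is then a routine mean-value perturbation from part (i).

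For part (i), I would first note that $a_0 + b_0 = \sqrt{5}$ (so $\kappa(a_0+b_0) = 1$) and $a_0 b_0 = 1$. Combining the first two fractions over a common denominator and using these identities gives
\[
\kappa\left( \frac{a_0}{a_0^2+x^2} + \frac{b_0}{b_0^2+x^2}\right)
= \kappa\cdot\frac{(a_0+b_0)(a_0 b_0 + x^2)}{(a_0^2+x^2)(b_0^2+x^2)}
= \frac{1+x^2}{(a_0^2+x^2)(b_0^2+x^2)}.
\]
Since $a_0^2 + b_0^2 = (a_0+b_0)^2 - 2 a_0 b_0 = 3$, the denominator expands to $1 + 3x^2 + x^4$, and a short manipulation yields the closed form
\[
g(a_0, b_0, 1; x) = \frac{1+x^2}{1+3x^2+x^4} - \frac{1}{1+x^2} = \frac{-x^2}{(1+x^2)(1+3x^2+x^4)}.
\]
The upper bound $g(a_0,b_0,1;x) \le 0$ (with equality at $x = 0$) is then immediate. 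For the lower bound, I would set $u = x^2 \ge 0$ and maximise $h(u) = u/[(1+u)(1+3u+u^2)]$; differentiating leads to the critical-point equation $2u^3 + 4u^2 - 1 = 0$, whose left-hand side is strictly increasing on $(0,\infty)$ and negative at $u=0$, so there is a unique positive root $u_*$. Since $h(0) = 0$ and $h(u)\sim 1/u^2\to 0$ as $u\to\infty$, this $u_*$ gives the global maximum of $h$, and numerical evaluation of $-h(u_*)$ produces the stated constant $g_0 = -0.121585107$.

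For part (ii), the idea is standard. Setting $\phi_x(t) = t/(t^2+x^2)$, one computes $\phi_x'(t) = (x^2 - t^2)/(t^2+x^2)^2$, whence $|\phi_x'(t)| \le 1/(t^2+x^2) \le 1/t^2$ uniformly in $x$. For $a$ with $|a-a_0| < m_1 \epsilon$ and $\epsilon < \epsilon_0$, the segment joining $a$ to $a_0$ lies in $[a_0 - m_1 \epsilon_0, \infty)$, so the mean-value theorem gives
\[
|\phi_x(a) - \phi_x(a_0)| \le \frac{m_1 \epsilon}{(a_0 - m_1 \epsilon_0)^2},
\]
and analogous bounds hold for the $b$- and $c$-contributions. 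Summing the three with the appropriate factors of $\kappa$ produces $|g(a,b,c;x) - g(a_0,b_0,c_0;x)| \le m_0 \epsilon$, and combining with part (i) yields the claimed two-sided inequality.

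The only delicate step is the numerical identification of the minimum in part (i): the cubic $2u^3 + 4u^2 - 1 = 0$ has no rational root, so $u_*$ must be located numerically and $g_0$ evaluated to the stated precision. Everything else is routine manipulation of rational functions, so I do not expect any substantive obstacle.
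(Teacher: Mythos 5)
Your proposal is correct and takes essentially the same approach as the paper. The paper likewise reduces part (i) to the cubic $2x^6+4x^4-1=0$ (your $2u^3+4u^2-1=0$ under $u=x^2$), though by differentiating $g$ in $x$ directly rather than by first producing your closed form $-x^2/[(1+x^2)(1+3x^2+x^4)]$; and for part (ii) the paper derives the same Lipschitz-type bound $\bigl|u/(u^2+x^2)-u_0/(u_0^2+x^2)\bigr|\le |u-u_0|/\min(u,u_0)^2$ by explicit algebraic factorization of the difference rather than by the mean-value theorem, after which the two arguments are identical.
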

\begin{proof} (i) Differentiating we find that 
\begin{align*}
   g(a_0,b_0,c_0;x)  &= 2x \Big(  \kappa \Big(- \frac{a_0}{(a_{0}^2+x^2)^2} +\frac{b_0}{(b_{0}^2+x^2)^2}  \Big) + \frac{c_0}{(c_{0}^2+x^2)^2}  \Big) \\
    & =  \frac{2x (2x^6 +4x^4-1)}{(a_{0}^2+x^2)^2(b_{0}^2+x^2)^2(c_{0}^2+x^2)^2}.
\end{align*}
The polynomial $2x^6+4x^4-1$ has one positive real root  $x_0 = 0.672016341 \ldots $.  It follows that 
$g(a_0,b_0,c_0;x)$ is decreasing on $(0,x_0)$ and increasing on $(x_0, \infty)$. 
Using these facts, the first part of the lemma follows as $g(a_0,b_0,c_0;0) = 0$, $g(a_0,b_0,c_0; x_0) = g_{0}= -0.121585107 \ldots$,
and $\lim_{x \to \infty} g(a_0,b_0,c_0;x) =0$.  \\
(ii) We begin by considering the difference 
\begin{multline*}
   g(a,b,c;x) - g(a_0,b_0,c_0;x) = 
\\\kappa \Big( 
  \frac{a}{a^2+x^2}-\frac{a_0}{a_{0}^2+x^2} + \frac{b}{b^2+x^2}-\frac{b_0}{b_{0}^2+x^2}
  \Big) - \frac{c}{c^2+x^2} + \frac{c_0}{c_{0}^2+x^2}.
\end{multline*}
For positive real numbers $u$ and $u_0$ we have that 
\begin{multline*}
   \Big| \frac{u}{u^2+x^2} - \frac{u_0}{u_{0}^2 +x^2} \Big|
  =  \Big| \frac{(u-u_0)(x^2-uu_0)}{(u^2+x^2)(u_{0}^2+x^2)} \Big|
\\ \le \frac{|u-u_0|(x^2+\max(u,u_0)^2)}{(x^2+u^2)(x^2+u_{0}^2)} 
\le \frac{|u-u_0|}{\min(u,u_0)^2}. 
\end{multline*}
Using this bound, the triangle inequality implies that  
\begin{multline*}
  |g(a,b,c;x) - g(a_0,b_0,c_0;x)|  
\\
 \le \epsilon \Big( 
   \frac{ \kappa m_1}{(a_0 -m_1 \epsilon_0)^2}
+ \frac{ \kappa m_2}{(b_0 -m_2 \epsilon_0)^2}
 + \frac{m_3}{(c_0 - m_3 \epsilon_0)^2} 
\Big)   =m_0 \epsilon.
\end{multline*}
Hence $g_{0} - m_0 \epsilon \le g(a,b,c;x) \le m_0 \epsilon$. 
\end{proof}
Observe that
\begin{align*}
& -\left( F(\s+ik\g_0,\b_0+i\g_0)- \kappa F(\s_1+ik\g_0,\b_0+i\g_0)\right) 
\\&= - \frac{\s-\b_0}{(\s-\b_0)^2+((k-1)\g_0)^2}- \frac{\s-1+\b_0}{(\s-1+\b_0)^2+((k-1)\g_0)^2} 
\\&+  \kappa\frac{\s_1-\b_0}{(\s_1-\b_0)^2+((k-1)\g_0)^2} + \kappa \frac{\s_1-1+\b_0}{(\s_1-1+\b_0)^2+((k-1)\g_0)^2}  
\\&= - \frac{\s-\b_0}{(\s-\b_0)^2+((k-1)\g_0)^2} - g(a,b,c;x),
\end{align*}
where $a = \sigma_1-\beta$, $b = \sigma_1-1+\beta$, $c=\sigma-1+\beta$, and $x =|k-1|\g_0$. 
From \myref{eq:cond-rho0}, it follows that $1-\epsilon \le \beta < 1$ and $1 < \sigma \le 1+\epsilon$, with $\epsilon=0.15$.  
Thus
\[
  |a-a_0| \le 1.9064\epsilon, |b-b_0| \le 0.9064 \epsilon, \text{ and } |c-c_0| \le \epsilon.
\]
We find $m_0=9.3001$. Thus $-g(a,b,c;x)\le m_0\epsilon = 1.3951$.
Collecting the inequalities proven above in this section, we obtain the following:
\begin{lemma}\label{Stechkin1}
Assume \myref{eq:cond} and \myref{eq:cond-rho0}.
For $k=1$ in Cases 1 to 3, we have 
\begin{equation}\label{eq-isolate4}
-\sideset{}{'} \sum_{\beta \ge \frac{1}{2}} \left(F(\s+i\g_0,\varrho)-\kappa F(\s_1+i\g_0,{\varrho})\right)
\le - \frac1{\sigma-\beta_0}.
\end{equation}
For $k\not=1$ and $\g_0>1$ as in Case 1, we have 
\begin{equation}\label{eq-isolate5}
-\sideset{}{'} \sum_{\beta \ge \frac{1}{2}} \left(F(\s+ik\g_0,\varrho)-\kappa F(\s_1+ik\g_0,{\varrho})\right) \le 0 .
\end{equation}
Let $\alpha_1 =1.3951$.
For $k\not=1$ and $\g_0\le 1$ as in Case 2, we have 
\begin{equation}\label{eq-isolate6}
-\sideset{}{'} \sum_{\beta \ge \frac{1}{2}} \left(F(\s+ik\g_0,\varrho)-\kappa F(\s_1+ik\g_0,{\varrho})\right)
\le - \frac{\sigma-\beta_0}{(\sigma-\beta_0)^2+(k-1)^2\g_0^2} + \alpha_1 .
\end{equation}
For $k\not=1$ and $\g_0\le 1$ as in Case 3, we have 
\begin{multline}\label{eq-isolate7}
-\sideset{}{'} \sum_{\beta \ge \frac{1}{2}} \left(F(\s+ik\g_0,\varrho)-\kappa F(\s_1+ik\g_0,{\varrho})\right)
\\ \le - \frac{\sigma-\beta_0}{(\sigma-\beta_0)^2+(k-1)^2\g_0^2} - \frac{\sigma-\beta_0}{(\sigma-\beta_0)^2+(k+1)^2\g_0^2} + 2\alpha_1.
\end{multline}
Moreover, for $\g_0\le 1$ as in Cases 4 and 5, we have 
\begin{equation}\label{eq-isolate8}
-\sideset{}{'} \sum_{\beta \ge \frac{1}{2}} \left(F(\s,\varrho)-\kappa F(\s_1,{\varrho})\right)
 \le - 2 \frac{\sigma-\beta_0}{(\sigma-\beta_0)^2+\g_0^2} + 2\alpha_1. 
\end{equation} 
\end{lemma}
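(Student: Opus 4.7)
My plan is to prove the five inequalities \eqref{eq-isolate4}--\eqref{eq-isolate8} by combining Stechkin's Lemma~\ref{Stechkin} (which allows discarding most of the zeros) with explicit isolation of $\varrho_0$ and, in Cases~3--5, its conjugate $\overline{\varrho_0}$, after which Lemma~\ref{gbds}(ii) disposes of the residual perturbation term. The structural fact driving everything is that each summand $F(\sigma+ik\gamma_0,\varrho) - \kappa F(\sigma_1+ik\gamma_0,\varrho)$ is non-negative by \eqref{eq-S1}, so dropping any zero from the negated sum yields an upper bound; it only remains to select which zeros to retain and to compute their contribution.

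For \eqref{eq-isolate4} ($k=1$), I would retain only $\varrho_0$ and split $F(\sigma+i\gamma_0,\varrho_0)$ into the explicit pole $\tfrac{1}{\sigma-\beta_0}$ plus a remainder, the remainder being non-negative after pairing with $\kappa F(\sigma_1+i\gamma_0,\varrho_0)$ via \eqref{eq-S2}. For \eqref{eq-isolate5} (Case~1 with $k \ne 1$) no zero is retained: \eqref{eq-S1} alone delivers the non-positive bound.

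The bulk of the work is for \eqref{eq-isolate6}--\eqref{eq-isolate8}. Here I retain $\varrho_0$ alone in Case~2 and both $\varrho_0$ and $\overline{\varrho_0}$ in Cases~3--5. The algebraic identity recorded just before the lemma decomposes each isolated contribution into a negative pole term $-\tfrac{\sigma-\beta_0}{(\sigma-\beta_0)^2 + ((k-1)\gamma_0)^2}$ plus a residual $\pm g(a,b,c;x)$ with $a = \sigma_1-\beta_0$, $b = \sigma_1-1+\beta_0$, $c = \sigma-1+\beta_0$, $x = |k-1|\gamma_0$. The pole terms match the stated bounds, the factor~$2$ in \eqref{eq-isolate8} arising because the two conjugate poles coincide when $\gamma_0 = 0$. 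Lemma~\ref{gbds}(ii) then bounds $|g| \le m_0 \epsilon$, contributing at most $\alpha_1$ per isolated zero.

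The main obstacle is verifying the admissible constants $m_1 = 1.9064$, $m_2 = 0.9064$, $m_3 = 1$ given the hypotheses $\beta_0 \in [0.85,1)$ and $\sigma \in (1,1.15]$. This requires quantifying how $\sigma_1 = \tfrac{1}{2}\bigl(1 + \sqrt{1+4\sigma^2}\bigr)$ deviates from $b_0 = (1+\sqrt{5})/2$: its derivative at $\sigma=1$ is $2/\sqrt{5} \approx 0.894$, and the small cushion up to $0.9064$ absorbs the curvature of the square root over the interval. The two perturbations $|\sigma_1 - b_0|$ and $|1 - \beta_0|$ \emph{add} in $|a-a_0|$ (giving roughly $0.9 + 1 = 1.9$) but occur with opposite signs in $|b-b_0|$. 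Once these are checked, $m_0 = 9.3001$ follows mechanically from the formula in Lemma~\ref{gbds}, yielding $\alpha_1 = m_0 \epsilon = 1.3951$ with $\epsilon = 0.15$, and the five inequalities follow by direct substitution.
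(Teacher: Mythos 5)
Your proposal is correct and reconstructs the paper's proof essentially verbatim: discard zeros via Stechkin's inequality \eqref{eq-S1}, handle $k=1$ with \eqref{eq-S2} to leave only the pole $\frac{1}{\sigma-\beta_0}$, isolate $\varrho_0$ (and, in Cases~3--5, also $\overline{\varrho_0}$) for $k\ne1$, split the isolated contribution into the pole term plus $-g(a,b,c;x)$ with $a=\sigma_1-\beta_0$, $b=\sigma_1-1+\beta_0$, $c=\sigma-1+\beta_0$, $x=|k\mp1|\gamma_0$, and apply Lemma~\ref{gbds}(ii) to bound the residual by $\alpha_1 = m_0\epsilon$ per isolated zero. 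The numerics $m_1=1.9064$, $m_2=0.9064$, $m_3=1$, $m_0=9.3001$ and the resulting $\alpha_1=1.3951$ are exactly those the paper records, so the two arguments coincide.
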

\subsection{Preliminary Lemma about the polar terms}
%
\begin{lemma}\label{lem-pole}
Assume \myref{eq:cond}. 
We define 
\[\alpha_{20}=0.0215
\,,\quad
\alpha_{21}= 1.5166
\,,\quad
\alpha_{22}= 1.6666.\]
If $k=0$ or $\g_0=0$, then
\begin{equation}\label{pole0}
F(\s+ik\g_0,1)-\kappa F(\s_1+ik\g_0,1) 
\le \frac1{\s-1} + \alpha_{20}. 
\end{equation}
If $k=1,2,3,4$ and $0<\g_0<1$, then
\begin{equation}\label{pole1}
F(\s+ik\g_0,1)-\kappa F(\s_1+ik\g_0,1) 
\le \frac{\s-1}{(\s-1)^2+\left(k\g_0\right)^2} + \alpha_{21}. 
\end{equation}
If $k=1,2,3,4$ and $\g_0\ge1$, then
\begin{equation}\label{pole2}
F(\s+ik\g_0,1)-\kappa F(\s_1+ik\g_0,1) 
\le  \alpha_{22}.
\end{equation}
\end{lemma}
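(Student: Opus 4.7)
The plan is to express $F(\sigma+ik\gamma_0,1)-\kappa F(\sigma_1+ik\gamma_0,1)$ as the sum of the main polar contribution plus an auxiliary quantity of the form $-g(a,b,c;x)$, and then invoke Lemma \ref{gbds} to handle the auxiliary quantity. Since $z=1$ is real,
\begin{equation*}
F(\sigma+it,1) = \Re\bigl(\tfrac{1}{\sigma-1+it}+\tfrac{1}{\sigma+it}\bigr) = \frac{\sigma-1}{(\sigma-1)^2+t^2} + \frac{\sigma}{\sigma^2+t^2}.
\end{equation*}
Applying this at $s=\sigma+ik\gamma_0$ and $s_1=\sigma_1+ik\gamma_0$ and comparing with the definition of $g$, one gets the master identity
\begin{equation*}
F(\sigma+ik\gamma_0,1) - \kappa F(\sigma_1+ik\gamma_0,1) = \frac{\sigma-1}{(\sigma-1)^2+(k\gamma_0)^2} - g(\sigma_1-1,\sigma_1,\sigma; k\gamma_0),
\end{equation*}
which already has the shape required by \eqref{pole1}.

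For Case $k=0$ or $\gamma_0=0$, the identity reduces to $\tfrac{1}{\sigma-1} + \tfrac{1}{\sigma} - \kappa\bigl(\tfrac{1}{\sigma_1-1}+\tfrac{1}{\sigma_1}\bigr)$. At $\sigma=1$ the non-polar piece vanishes, because $\sigma_1=b_0$ and $\tfrac{1}{a_0}+\tfrac{1}{b_0}=\sqrt{5}=1/\kappa$. I would verify that this non-polar piece is non-decreasing on $(1,1.15]$ (by differentiating in $\sigma$, using $\sigma_1'=2\sigma/\sqrt{1+4\sigma^2}$) and evaluate it at the endpoint $\sigma=1.15$ to certify $\alpha_{20}=0.0215$.

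For Case $k\ge 1$ and $0<\gamma_0<1$, the main term is already present in the master identity, so it suffices to bound $-g(\sigma_1-1,\sigma_1,\sigma;k\gamma_0)$ by $\alpha_{21}$. Setting $\epsilon=\sigma-1\le 0.15$ and using $1+a_0=b_0$ gives $|a-a_0|=|b-b_0|=|\sigma_1-b_0|$ and $|c-c_0|=\epsilon$. A direct calculation,
\begin{equation*}
\sigma_1-b_0 = \frac{\sqrt{1+4\sigma^2}-\sqrt{5}}{2} = \frac{2(\sigma+1)\epsilon}{\sqrt{1+4\sigma^2}+\sqrt{5}},
\end{equation*}
yields a bound $|\sigma_1-b_0|\le m_1\epsilon$ uniform on $(0,0.15]$, from which Lemma \ref{gbds}(ii) delivers $-g\le -g_0 + m_0\epsilon_0$. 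Choosing $m_1,m_2,m_3$ large enough to dominate these deviations (in particular, I can take the same values $m_1=1.9064$, $m_2=0.9064$, $m_3=1$ used for the zero terms, since those also bound the polar deviations) produces $m_0=9.3001$ and $-g_0+m_0\epsilon_0 = 0.1216+1.3951=1.5167$, matching $\alpha_{21}$.

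For Case $k\ge 1$ and $\gamma_0\ge 1$, one simply absorbs the main term into the bound: since $k\gamma_0\ge 1$,
\begin{equation*}
\frac{\sigma-1}{(\sigma-1)^2+(k\gamma_0)^2} \le \frac{\sigma-1}{(k\gamma_0)^2} \le \epsilon_0 = 0.15,
\end{equation*}
and adding this to $\alpha_{21}$ gives $\alpha_{22}=\alpha_{21}+0.15=1.6666$. The most delicate step is Case $k\ge 1$, $0<\gamma_0<1$, where one must verify that the chosen $m_i$ really control $|a-a_0|,|b-b_0|,|c-c_0|$ on the whole parameter range so that Lemma \ref{gbds}(ii) applies uniformly in $x=k\gamma_0$; everything else is routine monotonicity and arithmetic.
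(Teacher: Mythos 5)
Your proof is correct and follows the same route as the paper: the same master identity rewriting $F(\sigma+ik\gamma_0,1)-\kappa F(\sigma_1+ik\gamma_0,1)$ as $\frac{\sigma-1}{(\sigma-1)^2+(k\gamma_0)^2}-g(\sigma_1-1,\sigma_1,\sigma;k\gamma_0)$, the same reuse of Lemma~\ref{gbds}(ii) with the constants $m_1,m_2,m_3$ already chosen for the zero terms (which you rightly observe dominate the polar deviations since those correspond to $\beta=1$), and the same absorption of $\frac{\sigma-1}{(\sigma-1)^2+(k\gamma_0)^2}\le\epsilon_0$ when $\gamma_0\ge1$. The only cosmetic difference is that for $k\gamma_0=0$ the paper cites a Maple maximization over $\sigma\in[1,1.15]$ while you propose to establish monotonicity of the non-polar piece by differentiating, which is a harmless (indeed slightly more self-contained) way to reach the same endpoint value $\approx 0.02146$.
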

\begin{proof}
When $k\g_0=0$, we have
\[F(\s,1)-\kappa F(\s_1,1) = \frac1{\s-1} + \frac1{\s}-\frac{\kappa}{\s_1}-\frac{\kappa}{\s_1-1}.\]
A Maple computation shows that the maximum of $\frac1{\s}-\frac{\kappa}{\s_1}-\frac{\kappa}{\s_1-1}$ for $\s  \in[1,1.15]$ occurs at $\s=1.15$, and is $0.02146\ldots$. 
Thus
\begin{equation}\label{c1}
F(\s,1)-\kappa F(\s_1,1) \le \frac1{\s-1} + \alpha_{20}.
\end{equation}
Observe that
\begin{multline*}
F(\s+ik\g_0,1)-\kappa F(\s_1+ik\g_0,1)
\\ =  \frac{\s}{\s^2+(k\g_0)^2} + \frac{\s-1}{(\s-1)^2+(k\g_0)^2}-\frac{\kappa \s_{1}}{\s_{1}^2+(k\g_0)^2} - \frac{ \kappa(\s_{1}-1)}{(\s_{1}-1)^2+(k\g_0)^2}
\\ = \frac{\s-1}{(\s-1)^2+(k\g_0)^2} - g(\s_1-1,\s_1,\s;k\g_0).
\end{multline*}
Taking $\epsilon=0.15$, it follows from Lemma \ref{gbds} that
\begin{equation*}
g(\s_1-1,\s_1,\s;k\g_0) \ge g_0- m_0\epsilon \ge -1.5166.
\end{equation*}
Thus
\begin{equation}\label{c2}
F(\s+ik\g_0,1)-\kappa F(\s_1+ik\g_0,1)
\le  \frac{\s-1}{(\s-1)^2+(k\g_0)^2} + \alpha_{21}.
\end{equation}
Moreover, when $\g_0\ge1$, the above becomes
\begin{equation}\label{c3}
F(\s+ik\g_0,1)-\kappa F(\s_1+ik\g_0,1)
\le 0.15 + \alpha_{21}.
\end{equation}
The announced inequalities follow from \myref{c1}, \myref{c2}, and \myref{c3}.
\end{proof}
\subsection{Preliminary Lemma about the $\gamma_K$ terms}
We now bound the expression 
\begin{multline*}
\Re\left( \frac{\gamma'_K}{\gamma_K}(\s+ik\g_0) -\kappa \frac{\gamma'_K}{\gamma_K}(\s_{1}+ik\g_0) \right)
 = -\frac{(1- \kappa)\log \pi}{2}  n_K
\\+ \frac{a}2 \Re \left( \frac{\Gamma'}{\Gamma}\left(\tfrac{\s+ik\g_0}{2}\right) 
 -\kappa  \frac{\Gamma'}{\Gamma}\left(\tfrac{\s_{1}+ik\g_0}{2}\right) \right)
+  \frac{b}2 \Re \left( \frac{\Gamma'}{\Gamma}\left(\tfrac{\s+ik\g_0+1}{2}\right) -\kappa \frac{\Gamma'}{\Gamma}\left(\tfrac{\s_{1}+ik\g_0+1}{2}\right)\right),
\end{multline*}
where $a+b=n_K$ as in \eqref{eq:gamK}.
We have: 
\begin{lemma}
\label{boundGamma<1}
Assume \myref{eq:cond}. Let $k=0,1,2,3,4$ and $c=0$ or $1$. Then
\begin{multline*}
\frac12 \Re\left( \frac{\Gamma'}{\Gamma}\left(\tfrac{\s+ik\g_0+c}2\right) - \kappa \frac{\Gamma'}{\Gamma}\left(\tfrac{\s_{1}+ik\g_0+c}2\right) \right)
\\ \le 
\begin{cases}
d(0)=D(0) & \hbox{ if } k \g_0 = 0,\\
d(k) & \hbox{ if } 0 < \g_0 \le 1 \hbox{ and }  k=1,2,3,4,\\
\tfrac{1-\kappa}2 \log \g_0 + D(k) & \hbox{ if } \g_0 > 1 \hbox{ and }  k=1,2,3,4,
\end{cases} 
\end{multline*}
where admissible values of $D(k)$ and $d(k)$ are given in the following chart.
 \[
 \begin{array}{|c|c|c|c|c|c|}
 \hline
 k & 0 & 1 & 2 & 3 & 4 \\
 \hline
 d(k) & -0.0512 & -0.0390 & 0.2469 & 0.4452 & 0.5842 \\
 \hline
 D(k) & -0.0512 & 0.3918 & 0.3915 & 0.4062 & 0.4266 \\
 \hline
 \end{array}
 \]
\end{lemma}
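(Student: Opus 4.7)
The plan is to derive everything from Binet's second integral for the digamma function,
\[
\psi(z) = \log z - \frac{1}{2z} - 2\int_0^\infty \frac{t\,\d t}{(t^2+z^2)(e^{2\pi t}-1)}, \qquad \Re z > 0,
\]
applied to the two points $z = (\s+ik\g_0+c)/2$ and $z_1 = (\s_1+ik\g_0+c)/2$. Under \myref{eq:cond} both points lie in the half-plane $\Re z > 1/2$, so the integral converges absolutely and its real part is $O(1/|z|^2)$. This extracts the explicit main piece $\log|z|$ and reduces the lemma to explicit estimates on a bounded remainder.

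First, for $k\g_0 = 0$ the combination reduces to a one-variable function of $\s\in(1,1.15]$, namely $\tfrac{1}{2}\psi\bigl(\tfrac{\s+c}{2}\bigr) - \tfrac{\kappa}{2}\psi\bigl(\tfrac{\s_1+c}{2}\bigr)$, whose supremum over the compact interval (and over $c\in\{0,1\}$) is computed directly to produce $d(0)=D(0)=-0.0512$. Next, for $0 < \g_0 \le 1$ and $k\in\{1,2,3,4\}$, the imaginary part $k\g_0/2$ lies in the compact set $(0,2]$. Writing $\Re\psi(x+iy)$ via Binet as
\[
\Re\psi(x+iy) = \tfrac{1}{2}\log(x^2+y^2) - \tfrac{x}{2(x^2+y^2)} - 2\int_0^\infty\frac{t(x^2-t^2+y^2)\,\d t}{\bigl((x^2-t^2+y^2)^2+4x^2t^2\bigr)(e^{2\pi t}-1)},
\]
the desired quantity becomes an explicit two-variable function of $(\s,\g_0)\in(1,1.15]\times(0,1]$. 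Taking the worst case (over $(\s,\g_0)$ and then over $c\in\{0,1\}$) for each fixed $k$ supplies the tabulated $d(k)$.

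Finally, for $\g_0 > 1$ and $k\in\{1,2,3,4\}$, Binet's formula furnishes the explicit expansion
\[
\Re\psi(z) = \tfrac{1}{2}\log\!\Bigl(\tfrac{(\s+c)^2+k^2\g_0^2}{4}\Bigr) + E(z), \qquad |E(z)| \le \tfrac{C}{|z|},
\]
and similarly for $z_1$. Combining the two and pulling the logarithms apart,
\[
\tfrac{1}{2}\Re\bigl[\psi(z) - \kappa\,\psi(z_1)\bigr] = \tfrac{1-\kappa}{2}\log\g_0 + h(\s,\g_0,k,c),
\]
where $h$ gathers the bounded errors together with the terms $\tfrac14\log\bigl((\s+c)^2/\g_0^2+k^2\bigr) - \tfrac{\kappa}{4}\log\bigl((\s_1+c)^2/\g_0^2+k^2\bigr)$. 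One checks that $h$ is monotone decreasing in $\g_0$ on $(1,\infty)$, so its supremum is attained as $\g_0 \to 1^+$; numerical maximization over the remaining compact ranges of $\s$ and $c$ then yields $D(k)$.

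The main obstacle is the explicit numerical verification of the tabulated constants $d(k)$ and $D(k)$: the bounds must be sharp enough to feed the eventual zero-free-region inequalities yet robust over the full ranges of $\s$, $\g_0$ and $c$. A secondary technical point, in the $\g_0>1$ case, is to verify that no spurious $\log$-growth survives in $h$; this relies on the partial cancellation between the $\log|z|$ and $\kappa\log|z_1|$ contributions, which leaves $\g_0$-dependence only through the clean factor $\tfrac{1-\kappa}{2}\log\g_0$ plus a monotone bounded remainder.
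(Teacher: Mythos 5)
The paper itself does not prove this lemma from scratch: for $k=1,2,3,4$ it simply cites Lemmas 1 and 2 of McCurley \cite{MC1} as giving the bounds directly, and for $k=0$ it invokes a Maple computation. Your Binet-second-integral approach is the natural machinery underlying McCurley's lemmas, so you are essentially re-deriving the cited ingredients rather than taking a different route; the overall structure (extract $\log|z|$, bound the $-1/(2z)$ and integral remainders explicitly, then optimize over the compact $(\sigma,\gamma_0,c)$ ranges) is the same, just spelled out rather than delegated.

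One point in your $\gamma_0>1$ argument deserves care. You assert that $h(\sigma,\gamma_0,k,c)$ is monotone decreasing in $\gamma_0$ on $(1,\infty)$, which lets you evaluate the supremum at $\gamma_0=1^+$. That monotonicity is not actually correct as stated. Writing $u=1/\gamma_0^2$, the log-correction part of $h$ is
\[
\phi(u)=\tfrac14\log\!\bigl(1+Au\bigr)-\tfrac{\kappa}{4}\log\!\bigl(1+Bu\bigr),
\qquad A=\tfrac{(\sigma+c)^2}{k^2},\ B=\tfrac{(\sigma_1+c)^2}{k^2},
\]
and since $A<\kappa B$ in the relevant ranges (e.g.\ $k=1$, $c=0$, $\sigma\approx1$ gives $A\approx1$, $\kappa B\approx1.17$), $\phi'(u)$ changes sign at an interior point $u^\ast=(\kappa B-A)/(AB(1-\kappa))\in(0,1)$. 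So $\phi$ (hence $h$) has an interior minimum on $(1,\infty)$ and then climbs back toward its $\gamma_0\to\infty$ limit; it is not monotone. The conclusion you want still holds — the supremum of $h$ is attained in the limit $\gamma_0\to1^+$, because both $\phi$ and the Binet remainder $|E(z)|\ll1/|z|$ peak there — but the justification must compare the boundary value at $\gamma_0=1$ against the limiting value as $\gamma_0\to\infty$ rather than appeal to monotonicity. Replace the monotonicity claim by that two-sided comparison (or a finite numerical cover of $(1,T]$ plus an asymptotic tail bound for $\gamma_0>T$), and the argument is complete.
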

The cases $k=1,2,3,4$ are a direct consequence of Lemmas 1 and 2 of \cite{MC1}. 
The case $k=0$ can be obtained by a Maple computation.
Thus we deduce
\begin{lemma} \label{boundGammaL}
Assume \myref{eq:cond}.
If $k$ or $\g_0 = 0$, then
\begin{equation}\label{gamma0}
\Re\left( \frac{\gamma'_K}{\gamma_K}(\s) -\kappa \frac{\gamma'_K}{\gamma_K}(\s_{1}) \right)
\le 
\left( -\frac{(1- \kappa)\log \pi}{2}  + d(0) \right) n_K. 
\end{equation}
If $0 < \g_0 \le 1$ and $k=1,2,3,4$, then
\begin{equation}\label{gamma1}
\Re\left( \frac{\gamma'_K}{\gamma_K}(\s+ik\g_0) -\kappa \frac{\gamma'_K}{\gamma_K}(\s_{1}+ik\g_0) \right)
\le 
\left( -\frac{(1- \kappa)\log \pi}{2}   + d(k) \right) n_K .
\end{equation}
If $\g_0 > 1$ and $k=1,2,3,4$, then
\begin{equation}\label{gamma2}
\Re\left( \frac{\gamma'_K}{\gamma_K}(\s+ik\g_0) -\kappa \frac{\gamma'_K}{\gamma_K}(\s_{1}+ik\g_0) \right)
\le 
\left(  \frac{1-\kappa}2  \log \g_0 + \frac{1-\kappa}2 \log \frac{k}{\pi} + D(k) \right) n_K.
\end{equation}
\end{lemma}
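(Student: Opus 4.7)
The plan is to apply Lemma \ref{boundGamma<1} termwise to the decomposition of $\Re\bigl(\gamma'_K/\gamma_K(\s+ik\g_0) - \kappa\gamma'_K/\gamma_K(\s_1+ik\g_0)\bigr)$ displayed immediately above its statement, and then to use $a+b = n_K$. Since Lemma \ref{boundGamma<1} supplies the \emph{same} upper bound for the $c=0$ and $c=1$ half-digamma differences in each of the three sub-cases, collecting coefficients is essentially book-keeping.

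Concretely, set
\[
T_c = \tfrac{1}{2}\Re\!\left(\frac{\Gamma'}{\Gamma}\!\Bigl(\tfrac{\s+ik\g_0+c}{2}\Bigr) - \kappa\frac{\Gamma'}{\Gamma}\!\Bigl(\tfrac{\s_1+ik\g_0+c}{2}\Bigr)\right) \qquad (c=0,1),
\]
so that the displayed identity reads
\[
\Re\!\left(\frac{\gamma'_K}{\gamma_K}(\s+ik\g_0) - \kappa\frac{\gamma'_K}{\gamma_K}(\s_1+ik\g_0)\right) = -\tfrac{(1-\kappa)\log\pi}{2}\,n_K + a\,T_0 + b\,T_1.
\]
Letting $B(k,\g_0)$ denote the common upper bound on $T_c$ provided by Lemma \ref{boundGamma<1} in the appropriate sub-case, one has $a\,T_0 + b\,T_1 \le (a+b)\,B(k,\g_0) = n_K\,B(k,\g_0)$.

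For \eqref{gamma0} take $B = d(0)$ (case $k\g_0 = 0$); for \eqref{gamma1} take $B = d(k)$ (case $0 < \g_0 \le 1$, $k\in\{1,2,3,4\}$). In both instances the stated bound follows at once by combining with $-\tfrac{(1-\kappa)\log\pi}{2}\,n_K$. For \eqref{gamma2}, Lemma \ref{boundGamma<1} (together with the Stirling-type estimate in McCurley's Lemma~2, which feeds the $\log k$ contribution out of the imaginary argument $k\g_0$) gives $B = \tfrac{1-\kappa}{2}\log\g_0 + \tfrac{1-\kappa}{2}\log k + D(k)$; the identity $-\tfrac{(1-\kappa)\log\pi}{2} + \tfrac{1-\kappa}{2}\log k = \tfrac{1-\kappa}{2}\log(k/\pi)$ then absorbs the $\log\pi$ constant into the announced form.

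The only substantive ingredient is the $c$-independence of $B(k,\g_0)$, which is precisely the content of Lemma \ref{boundGamma<1} (derived from McCurley's Lemmas~1--2 for $k\ge 1$ and a direct Maple computation at $k\g_0=0$). Granted that input, there is no genuine obstacle; the proof reduces to inserting the three values of $B(k,\g_0)$ and simplifying.
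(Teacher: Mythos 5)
Your proposal is correct and matches the paper's implicit proof exactly: apply the termwise bound from Lemma \ref{boundGamma<1} to the two half-digamma differences in the displayed decomposition, use $a+b=n_K$, and fold the $-\tfrac{(1-\kappa)\log\pi}{2}$ constant into the $\log(k/\pi)$ term. You are also right to flag that the third case of Lemma \ref{boundGamma<1} as printed reads $\tfrac{1-\kappa}{2}\log\g_0 + D(k)$ rather than $\tfrac{1-\kappa}{2}\log(k\g_0) + D(k)$, which is what McCurley's estimate (whose argument has imaginary part $\tfrac{k\g_0}{2}$) actually yields and what is needed to obtain the $\log(k/\pi)$ in \eqref{gamma2}; the paper's terse ``Thus we deduce'' glosses over this, and your explicit accounting for the $\log k$ contribution supplies the missing bookkeeping.
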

\section{Zero-free regions}
In this section, we continue to assume conditions \myref{eq:cond} and \myref{eq:cond-rho0} for the parameters $\s,\L$, and for the non-trivial zero $\varrho_0=\b_0+i\g_0$.
We use lemmas \ref{Stechkin1}, \ref{lem-pole}, and \ref{boundGammaL} from Section \ref{preliminary-lemmas} to provide upper bounds for the $S_j$'s and thus derive zero-free regions.

In order to simplify future computations, we record the following elementary lemma:
\begin{lemma}\label{elementary}
Let $a,b,q,t>0$ be fixed. 
\begin{enumerate}
\item[(i)] If $2a-b>0$, then 
$\ds{f_1(x) = - \frac{a}{x}- \frac{bx}{x^2+t^2}}$
is increasing.
\item[(ii)] $\ds{ f_2(a,b;x)= \frac{a}{a^2+x^2} - \frac{b}{b^2+x^2} }$
has opposite sign of $(b-a)(x^2-ab)$.
\item[(iii)] If $qb^3 \ge a^3, qb\ge a$, and $qa\ge b$, then $\ds{ f_3(a,b,q;x)= q\frac{a}{a^2+x^2} - \frac{b}{b^2+x^2} }$
is decreasing with $x$.
\end{enumerate}
\end{lemma}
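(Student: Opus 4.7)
The plan is to treat the three parts separately, since each reduces to a finite algebraic computation; no analysis or auxiliary results are needed beyond taking derivatives and clearing denominators.

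For (i), I would compute $f_1'(x)$ directly and then clear denominators by multiplying through by $x^2(x^2+t^2)^2$. The resulting numerator should be a polynomial in $x^2$ with coefficients built from $a$, $b$, and $t^2$; specifically, I expect to arrive at $(a+b)x^4 + (2a-b)t^2 x^2 + at^4$. Under the hypothesis $2a-b>0$ together with $a,b,t>0$, each of the three coefficients is strictly positive, so $f_1'(x)>0$ and $f_1$ is increasing. The hypothesis $2a-b>0$ enters precisely through the middle coefficient, which is the only one whose positivity is not automatic from the signs of $a$, $b$, $t$.

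For (ii), I would simply combine the two fractions over the common denominator $(a^2+x^2)(b^2+x^2)$. A short simplification shows that the numerator factors as $(b-a)(ab-x^2) = -(b-a)(x^2-ab)$, so the sign of $f_2(a,b;x)$ is opposite to that of $(b-a)(x^2-ab)$, as claimed.

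For (iii), I would differentiate to get $f_3'(x) = -2x\bigl(\tfrac{qa}{(a^2+x^2)^2} - \tfrac{b}{(b^2+x^2)^2}\bigr)$. Decreasing monotonicity for $x>0$ is then equivalent to non-negativity of the bracketed factor, i.e. $qa(b^2+x^2)^2 \ge b(a^2+x^2)^2$. Expanding and organizing by powers of $x^2$, the difference becomes $ab(qb^3-a^3) + 2ab(qb-a)x^2 + (qa-b)x^4$, and the three hypotheses $qb^3\ge a^3$, $qb\ge a$, $qa\ge b$ translate precisely into non-negativity of the three coefficients. The main obstacle is purely cosmetic: making sure the computation in (i) is collected into the clean form above so the role of $2a-b>0$ is transparent, and similarly in (iii) that the three hypotheses align exactly with the three coefficients after expansion.
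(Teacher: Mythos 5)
Your proposal is correct and follows the same route as the paper: differentiate (or combine fractions) and read off the sign from the resulting numerator, whose coefficients are nonnegative exactly under the stated hypotheses. The algebraic identities you predict for $f_1'$, the numerator of $f_2$, and $f_3'$ all match the paper's formulae, so this is essentially the paper's proof with slightly more commentary.
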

\begin{proof}
We have 
\begin{align*}
& f_1'(x)  = \frac{(a+b)x^4+(2a-b)t^2x^2+at^4}{x^2(x^2+t^2)^2} , \\
& f_2(a,b;x) = -\frac{(b-a)(x^2-ab)}{(a^2+x^2)(b^2+x^2)} , \\
& \frac{\partial}{\partial x} f_3(a,b,q;x)
= (-2x)\left( \frac{ab(qb^3-a^3) + 2ab(qb-a)x^2+(qa-b)x^4)}{(a^2+x^2)^2(b^2+x^2)^2}\right).
\end{align*}
The lemma follows from the above three formulae.
\end{proof}
\subsection{Case 1: Zero-free region when $\g_0>1$}\label{large-region}
Let $r>0$. We choose $\s$ such that \[\s-1=r(1-\b_0).\]
We define the trigonometric polynomial
\begin{align*}
&P(\theta) = 8( 0.8924+\cos \theta)^2( 0.1768+\cos\theta)^2= \sum_{k=0}^4 a_k \cos(k\theta),
\\
& a_{0} = 9.034112058 ,\ a_{1} = 15.52951106, \ a_{2} = 9.834965120 ,\ a_{3} = 4.2768, \ a_{4} = 1.  
\end{align*}
We apply Lemma \ref{Stechkin1}, using equations \myref{eq-isolate4} when $k=1$, and \myref{eq-isolate5} when $k=0,2,3,4$:
\[
S_{1}(\s,\g_0)
\le - \frac{a_1}{\sigma-\beta_0} .
\]  
We apply Lemma \ref{lem-pole}, using equations \myref{pole0} when $k=0$, and \myref{pole2} otherwise:
\[
S_{3}(\s,\g_0)
\le \frac{a_0}{\s-1} + a_0\alpha_{20}
+ \alpha_{22} \sum_{k=1}^4 a_k. 
\]
We apply Lemma \ref{boundGamma<1}, using equations \myref{gamma0} for $k=0$ and \myref{gamma2} otherwise:
\begin{multline*}
S_{4}(\s,\g_0)
\le 
a_0 \Big( - \frac{(1- \kappa)\log \pi}2  + d(0) \Big) n_K 
\\+ 
\sum_{k=1}^4 a_k \Big(  \frac{1-\kappa}2  \log \g_0 + \frac{1-\kappa}2 \log \frac{k}{\pi} + D(k) \Big) n_K .
\end{multline*}
Together with \myref{positive-explicit-formula}, \myref{eq-trig1}, and the above inequalities, we deduce
\[
0 \le \Big(- \frac{a_1}{1+r} 
+ \frac{a_0}{r} \Big)\frac1{1-\b_0} 
+ c_1 \log d_K + c_2 (\log\g_0) n_K  + c_3 n_K + c_4, 
\]
where 
\begin{align*}
& c_1 = \frac{1-\kappa}2 \sum_{k=0}^4 a_k,\qquad
 c_2= \frac{1-\kappa}2 \Big(\sum_{k=1}^4 a_k\Big) ,\\ 
& c_3= a_0 \Big(  - \frac{(1- \kappa)\log \pi}2  + d(0) \Big) 
+  \sum_{k=1}^4 a_k \Big( \frac{1-\kappa}2 \log \frac{k}{\pi} + D(k) \Big) ,\\ 
& c_4=  a_0\alpha_{20}
+ \alpha_{22} \sum_{k=1}^4 a_k.
\end{align*}
Thus
\[
\b_0 \le 1- \frac{\frac{a_1}{1+r}-\frac{a_0}{r}}{c_1 \log d_K + c_2  (\log\g_0) n_K  + c_3 n_K + c_4}.
\]
The largest value for $\frac{a_1}{1+r}-\frac{a_0}{r}$ occurs for $r=\frac{\sqrt{a_0}}{\sqrt{a_1}-\sqrt{a_0}}=3.21438\ldots $
and hence
\[
\b_0 \le 1- \frac{1}{     
12.5419 \log d_K +  
9.6861 (\log\g_0) n_K + 
3.0297 n_K + 
58.6265}.
\]
This proves the zero free region \myref{zfrD1} of Theorem \ref{ZFR}.
\begin{remark}\
We ran a Maple computation to determine the $a_k$'s which minimized
\[
\frac{c_1}{\frac{a_1}{1+r}-\frac{a_0}{r}}
= \frac{\frac{1-\kappa}2 \sum_{k=0}^4 a_k}{(\sqrt{a_1}- \sqrt{a_0})^2}.
\]
\end{remark}
\noindent
Let $0<r,c<1$. For the remainder of the article, we consider
\[
\s-1 = \frac{r}{\L}\ \text{and} \ 
1-\b_0= \frac{c}{\L}.
\]
\subsection{Case 2: Zero-free region when $\frac{d_2}{\L}<\gamma_0\le1$, with $d_2=2.374$.}\label{superior-region}
For the trigonometric polynomial, we choose
\begin{align*}
&P(\theta) = 8(0.8918+\cos \theta)^2(0.1732+\cos\theta)^2= \sum_{k=0}^4 a_k \cos(k\theta), \\
&a_{0} = 8.96344062,\ a_{1} = 15.41199431, \ a_{2} = 9.77257808,\ a_{3} = 4.26, \ a_{4} = 1. 
\end{align*}
In addition to our conditions on $\s$ and $\varrho_0$, we impose the conditions
\begin{eqnarray}
& 0< \frac{a_0}{a_1-a_0}c < r <1 , \label{cond11}\\
& d_2>  \frac{\sqrt{r(r+c)}}{2} \label{cond12}.
\end{eqnarray}
We apply Lemma \ref{Stechkin1}, using \myref{eq-isolate4} for $k=1$, and \myref{eq-isolate6} for $k=k=0,2,3,4$:
\[
S_{1}(\s,\g_0)
\le - \frac{a_1}{\sigma-\beta_0}
- \sum_{k=0,2,3,4} \frac{a_k(\s-\b_0)}{(\s-\b_0)^2+(k-1)^2\g_0^2}
+ \alpha_1 \sum_{k=0,2,3,4} a_k. 
\]
We apply Lemma \ref{lem-pole}, using \myref{pole0} for $k=0$, and \myref{pole1} otherwise:
\[
S_{3}(\s,\g_0)
\le \frac{a_0}{\s-1} + a_0\alpha_{20}
+ \sum_{k=1}^4 \frac{a_k (\s-1)}{(\s-1)^2+k^2\g_0^2} 
+ \alpha_{21} \sum_{k=1}^4 a_k. 
\]
We apply Lemma \ref{boundGamma<1}, using equations \myref{gamma0} for $k=0$, and \myref{gamma1} otherwise:
\[
S_{4}(\s,\g_0)
\le 
\sum_{k=0}^4 a_k \Big(  -\frac{1-\kappa}2 \log \pi + d(k) \Big) n_K .
\]
Note that the coefficient of $n_K$ is negative and may be dispensed.
Together with \myref{positive-explicit-formula}, \myref{eq-trig1}, and the above inequalities, we deduce
 \begin{equation}
 \begin{split}\label{trig2}
0 \le &
  \frac{a_0}{\s-1}- \frac{a_1}{\sigma-\beta_0}
+   \frac{a_1(\s-1)}{(\s-1)^2+\g_0^2}
- \frac{a_0(\s-\b_0)}{(\s-\b_0)^2+\g_0^2}
+  \frac{1-\kappa}2 \log d_K  \sum_{k=0}^4 a_k
\\&+ \sum_{k=2,3,4} a_k \Big(\frac{\s-1}{(\s-1)^2+k^2\g_0^2}
-\frac{\s-\b_0}{(\s-\b_0)^2+(k-1)^2\g_0^2} \Big)
\\& + \alpha_1 \sum_{k=0,2,3,4} a_k
 + \alpha_{20} a_0
+ \alpha_{21} \sum_{k=1}^4 a_k.     
 \end{split}
 \end{equation}
The term in the second row may be dropped since, for $k=2,3,4$,
\begin{equation}\label{bound-last-zeros}
 \frac{\s-1}{(\s-1)^2+k^2\g_0^2} -\frac{\s-\b_0}{(\s-\b_0)^2+(k-1)^2\g_0^2}
\le 0.
\end{equation}
This is established as follows:
\[
\frac{\s-1}{(\s-1)^2+k^2\g_0^2} -\frac{\s-\b_0}{(\s-\b_0)^2+(k-1)^2\g_0^2},
\le f_2(a,b;x)
\] 
where $a=\s-1,b=\s-\b_0$, and $x=k \g_0$.
We have $b-a=(\s-\b_0)-(\s-1) \ge 0$ and $x^2-ab=(k \g_0)^2 -(\s-\b_0)(\s-1) \ge \frac{2d_2-rc}{\L^2} \ge 0$ by condition \myref{cond12}.
Hence Lemma \ref{elementary} gives that $f_2(a,b;x) \le 0$, and \myref{bound-last-zeros} is established.
Next, we have
\begin{equation}\label{bound-first-zeros}
 \frac{a_1(\s-1)}{(\s-1)^2+\g_0^2} -  \frac{a_0(\sigma-\beta_0)}{(\sigma-\beta_0)^2+\g_0^2}
\le a_0f_3(a,b,q;x)
\end{equation}
where $a=\s-1=\frac{r}{\L}$, $b=\s-\b_0=\frac{r+c}{\L}$, $q=\frac{a_1}{a_0}=1.71942\ldots$, and $x=\g_0$. 
The conditions of Lemma \ref{elementary} (iii) are satisfied. Hence 
$f_3(a,b,q;x)$ decreases with $x$ on $(d_2/\L,1)$ and
\[
f_3(a,b,q;x)\le f_3(a,b,q;d_2/\L) = 
\Big( q \frac{r}{r^2+d_2^2} - \frac{r+c}{(r+c)^2+d_2^2} \Big)\L.
\]
This proves \myref{bound-first-zeros}.
Together with \myref{trig2} and \myref{bound-first-zeros}, we obtain
\begin{multline*}\label{trig21}
  0 \le 
  \frac{a_0}{r} -  \frac{a_1}{r+c}
+ \frac{a_1 r}{r^2+d_2^2} 
- \frac{a_0(r+c)}{(r+c)^2+d_2^2}  
+  \frac{1-\kappa}2\sum_{k=0}^4 a_k
\\
+ \Big(\alpha_1 \sum_{k=0,2,3,4} a_k
 + a_0\alpha_{20}
+ \alpha_{21} \sum_{k=1}^4 a_k \Big)\frac1{\L},
\end{multline*}
which becomes, for $\L$ asymptotically large,
$  0 \le \mathcal{E}(d_2,r,c)$, where
\begin{equation}\label{trig22}
  \mathcal{E}(d_2,r,c)
  = \frac{a_0}{r} -  \frac{a_1}{r+c}
+ \frac{a_1 r}{r^2+d_2^2} 
- \frac{a_0(r+c)}{(r+c)^2+d_2^2}  
+  \frac{1-\kappa}2\sum_{k=0}^4 a_k.
\end{equation}
Observe that since $2a_1>a_0$, (1) of Lemma \ref{elementary} implies that $ \mathcal{E}(d_2,r,c)$ increases with $c$.
Thus the smallest value for $c=c(d_2,r)$ satisfying the above inequality is the root of $ \mathcal{E}(d_2,r,c)$.
We now choose the parameters $d_2$ and $r$ such that $c(d_2,r)$ is as small as possible.
A GP-Pari computation gives
\begin{center}
\begin{tabular}{|c|c|c|}
\hline
$d_2$ & $r$ & $1/c$ \\ 
\hline
$2.374$ & $0.248$ &  $12.7305 $ \\ 
\hline
\end{tabular}\end{center}
\begin{remark}\label{remark1}
We now give some motivation for the choice of the trigonometric polynomial.
Numerically, we expect $d_2$ to be close to $2.5$. Thus it will be much larger than the expected values for $r$ and $c$. 
To simplify the analysis of \myref{trig22}, we drop the terms depending on $d_2$. 
We expect the values for $r$ and $c$ to be very close to $\tilde{r}$ and $\tilde{c}$ respectively, where $\tilde{r}$ and $\tilde{c}$ are numbers which satisfy
\[
a_0 \frac1{\tilde{r}}  - a_1 \frac1{\tilde{r}+\tilde{c}} 
 + \frac{1-\kappa}2 \sum_{k=0}^4 a_k \ge 0.
\]
This occurs as long as  
\[\tilde{c} \ge \frac{(a_1-a_0)\tilde{r}- \left(\frac{1-\kappa}2 \sum_{k=0}^4 a_k\right)  \tilde{r}^2 }{a_0+ \left(\frac{1-\kappa}2 \sum_{k=0}^4 a_k\right)  \tilde{r}}.\]
By calculus, the expression on the right is minimized for
\[
\tilde{r} = \frac{a_0\left(\sqrt{\frac{a_1}{a_0}}-1\right)}{\frac{1-\kappa}2 \sum_{k=0}^4 a_k}. 
\]
We set $d_2=2.5$, and run a Maple computation to determine which $a_k$'s make the root $c$ of $\mathcal{E}(2.5,\tilde{r},c)=0$ as small as possible. 
\end{remark}
\subsection{Case 3: Zero-free region when $\frac{d_1}{\L}<\g_0\le\frac{d_2}{\L}$, with $d_1=1.021,\ d_2=2.374$.}\label{middle-region}
We choose for our trigonometrical polynomial:
\begin{align*}
& P(\theta) = 8(0.8924+\cos \theta)^2(0.1771+\cos\theta)^2= \sum_{k=0}^4 a_k \cos(k\theta), \\
& a_{0} = 9.039496 , \ 
a_{1} = 15.538449,\ 
 a_{2} = 9.839673,\ 
a_{3} = 4.278, \ a_{4} = 1.
\end{align*}
We impose the condition
\begin{equation}
\frac{a_0}{a_1-a_0}c <r<1. \label{cond21}  
\end{equation}
We apply Lemma \ref{Stechkin1}, using \myref{eq-isolate4} for $k=1$, and \myref{eq-isolate7} otherwise:
\begin{multline*}
S_{1}(\s,\g_0)
\le - \frac{a_1}{\sigma-\beta_0} + 2\alpha_1 \sum_{k=0,2,3,4} a_k
\\ - \sum_{k=0,2,3,4} a_k \Big(\frac{\s-\b_0}{(\s-\b_0)^2+(k-1)^2\g_0^2} + \frac{\s-\b_0}{(\s-\b_0)^2+(k+1)^2\g_0^2} \Big). 
\end{multline*}
We apply Lemma \ref{lem-pole}, using \myref{pole0} for $k=0$, and \myref{pole1} otherwise:
\[
S_{3}(\s,\g_0)
\le \frac{a_0}{\s-1} + a_0\alpha_{20}
+ \sum_{k=1}^4 \frac{a_k (\s-1)}{(\s-1)^2+k^2\g_0^2} 
+ \alpha_{21} \sum_{k=1}^4 a_k. 
\]
We apply Lemma \ref{boundGamma<1}, using equations \myref{gamma0} for $k=0$, and \myref{gamma1} otherwise:
\[
S_{4}(\s,\g_0) \le 
\sum_{k=0}^4 a_k \Big( -\frac{1-\kappa}2 \log \pi + d(k) \Big) n_K 
\le 
0.
\]
Together with \myref{positive-explicit-formula}, \myref{eq-trig1}, and the above inequalities, we deduce
 \begin{multline}\label{trig3}
 0 \le 
  \frac{a_0}{\s-1}- \frac{a_1}{\sigma-\beta_0}
+   \frac{a_1(\s-1)}{(\s-1)^2+\g_0^2}
- \frac{2a_0(\s-\b_0)}{(\s-\b_0)^2+\g_0^2}
- \frac{a_1(\s-\b_0)}{(\s-\b_0)^2+4\g_0^2}
\\+ \sum_{k=2,3,4} a_k \Big(\frac{\s-1}{(\s-1)^2+k^2\g_0^2}
-\frac{\s-\b_0}{(\s-\b_0)^2+(k-1)^2\g_0^2} - \frac{\s-\b_0}{(\s-\b_0)^2+(k+1)^2\g_0^2} \Big)
\\
+ \frac{1-\kappa}2 \Big(\sum_{k=0}^4 a_k \Big) \log d_K 
+2 \alpha_1 \sum_{k=0,2,3,4} a_k
 + a_0\alpha_{20}
+ \alpha_{21} \sum_{k=1}^4 a_k. 
 \end{multline}
For $k=2,3,4$, since $\g_0\in(d_1/\L,d_2/\L)$ we have
\begin{multline*}
\frac{\s-1}{(\s-1)^2+k^2\g_0^2}
-\frac{\s-\b_0}{(\s-\b_0)^2+(k-1)^2\g_0^2} - \frac{\s-\b_0}{(\s-\b_0)^2+(k+1)^2\g_0^2}
\\ \le \Big( \frac{r}{r^2+k^2 d_1^2}
-\frac{r+c}{(r+c)^2+(k-1)^2d_2^2} - \frac{r+c}{(r+c)^2+(k+1)^2d_2^2} \Big)\frac1{\L} .
\end{multline*}
Since $r$ and $c$ satisfy \myref{cond21}, the same argument that gave \myref{bound-first-zeros} applies. Thus
\[
   \frac{a_1(\s-1)}{(\s-1)^2+\g_0^2} - \frac{a_0(\s-\b_0)}{(\s-\b_0)^2+\g_0^2}   
\]
decreases with $\g_0\in (d_1/\L,d_2/\L)$.
We obtain
\[
   \frac{a_1(\s-1)}{(\s-1)^2+\g_0^2} - \frac{a_0(\s-\b_0)}{(\s-\b_0)^2+\g_0^2}   
\le \Big(   \frac{a_1 r}{r^2+d_1^2} -  \frac{a_0 (r+c)}{(r+c)^2+d_1^2} \Big)\L. 
\]
We use the trivial bound for $\g_0\in (d_1/\L,d_2/\L)$:
\[
- \frac{a_0(\s-\b_0)}{(\s-\b_0)^2+\g_0^2}
- \frac{a_1(\s-\b_0)}{(\s-\b_0)^2+4\g_0^2}
 \le  
 - \Big( \frac{a_0(r+c)}{(r+c)^2+d_2^2} +  \frac{a_1(r+c)}{(r+c)^2+4d_2^2} \Big)\L.
\]
We deduce that \myref{trig3} becomes
$0 \le  \mathcal{E}(d_1,d_2,r,c)$. Here
 \begin{equation}
  \begin{split}
\label{trig31}
  \mathcal{E}(d_1,d_2,r,c)= &
  \frac{a_0}{r}- \frac{a_1}{r+c}
+    \frac{a_1 r}{r^2+d_1^2} -  \frac{a_0 (r+c)}{(r+c)^2+d_1^2}
- \frac{a_0(r+c)}{(r+c)^2+d_2^2} 
\\ &
-  \frac{a_1(r+c)}{(r+c)^2+4d_2^2}
+ \sum_{k=2,3,4} a_k \Big( \frac{r}{r^2+k^2 d_1^2}
-\frac{r+c}{(r+c)^2+(k-1)^2d_2^2} 
\Big. \\ & \Big.
- \frac{r+c}{(r+c)^2+(k+1)^2d_2^2} \Big)
+  \frac{1-\kappa}2\sum_{k=0}^4 a_k .
  \end{split}
 \end{equation}
Calculus gives that the above increases with $c$.
Thus the smallest value of $c$ satisfying the inequality \myref{trig31} is the root of $ \mathcal{E}(d_1,d_2,r,c)=0$.
We obtain
\begin{center}
\begin{tabular}{|c|c|c|c|}
\hline
$d_1$ & $d_2$ & $r$ & $1/c$ \\ 
\hline
$1.021$ & $2.374$ & 0.236 & $12.7301$ \\
\hline
\end{tabular}\end{center}
\begin{remark}
We explain our choice of $P$ in this section.
To simplify the analysis, we drop the terms depending on $d_1$ and $d_2$, and consider
 \[
  0=\frac{a_0}{\tilde{r}}- \frac{a_1}{\tilde{r}+\tilde{c}} + \sum_{k=0}^4 a_k \frac{1-\kappa}2 .
 \] 
As in remark \ref{remark1}, a similar analysis leads to
\[\tilde{c} = \frac{(a_1-a_0)\tilde{r}- \left(\frac{1-\kappa}2 \sum_{k=0}^4 a_k\right)  \tilde{r}^2 }{a_0+ \left(\frac{1-\kappa}2 \sum_{k=0}^4 a_k\right)  \tilde{r}}
 \]
with
\[
\tilde{r} = \frac{a_0\left(\sqrt{\frac{a_1}{a_0}}-1\right)}{\frac{1-\kappa}2 \sum_{k=0}^4 a_k}. 
\]
We set $d_1=1$, $d_2=2.5$, and run a Maple computation to determine which $a_k$'s make the root $c$ of $\mathcal{E}(1,2.5,\tilde{r},c)=0$ as small as possible. 
\end{remark}
\subsection{Case 4: Zero-free region when $0 < \g_0 \le \frac{d_1}{\L}$ with $d_1=1.021$.}\label{lower-region}
In this case we consider 
\[
S(\s,0)=f(\s,0),\]
where
\begin{multline*}
f(\s,0)=
-\sideset{}{'} \sum_{\beta \ge \frac{1}{2}} \left(F(\s,{\varrho})-\kappa F(\s_1,{\varrho})\right)  
+ \frac{1-\kappa}2\log d_K
\\+F(\s,1)-\kappa F(\s_1,1)  + \Re\left( \frac{\gamma'_K}{\gamma_K}(\s) -\kappa \frac{\gamma'_K}{\gamma_K}(\s_1) \right).
\end{multline*}
Using \myref{eq-isolate8}, \myref{pole0}, and \myref{gamma0}, we obtain
\begin{multline*}
f(\s,0)
\le 
- 2 \frac{\sigma-\beta_0}{(\sigma-\beta_0)^2+\g_0^2} + 2\alpha_1  
+ \frac{1-\kappa}2\log d_K
\\+\frac{1}{\s-1} + \alpha_{20}
+ \Big(  -\frac{1-\kappa}2 \log \pi + d(0) \Big) n_K.
\end{multline*}
The coefficient of $n_K$ is negative and may be dropped.
Together with the fact that $f(\s,0)\ge 0$, we deduce that, for $\g_0<\frac{d_1}{\L}$,
\[
0 \le 
\frac{1}{r}- 2 \frac{r+c}{(r+c)^2+d_1^2}   
+ \frac{1-\kappa}2
+\frac{ 2\alpha_1 + \alpha_{20}}{\L},
\]
which for $\L$ asymptotically large gives
\[
0 \le 
\frac{1}{r}- 2 \frac{r+c}{(r+c)^2+d_1^2}   
+ \frac{1-\kappa}2.
\]
We solve:
\[
c \ge \frac{-\frac{1-\kappa}2 r^2 + \sqrt{ r^2 - d_1^2(1+\frac{1-\kappa}2r)^2 } }{1+\frac{1-\kappa}2 r}, 
\]
and find 
\begin{center}
\begin{tabular}{|c|c|c|}
\hline
 $d_1$ & $r$ & $1/c$ \\ 
\hline
$1.021$ & $2.1426\ldots$ & $12.5494$ \\ \hline
$1/4$ & $1.5344\ldots$ & $1.6918 $ \\ \hline
$1/1.9996\ldots $ & $1.644$ & $1.9997$  \\ \hline
\end{tabular}\end{center}
The two last rows justify the regions announced in \myref{zfrD3} and \myref{zfrD4}. 
\subsection{Case 5: The case of real zeros}\label{real-region}\label{two-real}
Consider $\b_1$ and $\b_2$, two real zeros with $\b_1 \le \b_2$.
We isolate both of them from the sum over the zeros, and use the trivial inequality: 
\[
-\frac1{\s-\b_1}-\frac1{\s-\b_2} \le -\frac2{\s-\b_1}.
\]
It follows from $f(\s,0)\ge0$ that   
\[
0\le f(\s,0)  \le - \frac2{\s-\b_1} + 2\alpha_1
+ \frac{1-\kappa}2\log d_K
+ \frac1{\s-1} + \alpha_{20}.
\]
We write $1-\b_1=\frac{c_1}{\L}$ and obtain for $\L$ sufficiently large  
\[
 0 \le 
\frac1r   -  \frac2{r+c_1}
 +  \frac{1-\kappa}2. 
\]
The largest value for $c_1$ is given by $\ds{\tfrac{r-\tfrac{1-\kappa}2 r^2}{1+\tfrac{1-\kappa}2 r}}$ and this expression is maximized for 
$\ds{ r=2\tfrac{\sqrt{2}-1}{1-\kappa}}$:
\begin{center}
\begin{tabular}{|c|c|}
\hline
 $r$ & $1/c_1$ \\ 
\hline 
$1.4986\ldots $ &  $1.6110 $ \\
\hline
\end{tabular}\end{center}
This proves that there is at most one zero in the region \[\Re s \ge 1- \frac{1}{1.62 \L} \text{  and  } \Im s=0.\]
\subsection{Conclusion.}
Observe that $\max(12.7305, 12.7301 , 12.5494 ) = 12.7305$.
Combining the results proven in Sections \ref{superior-region}, \ref{middle-region}, \ref{lower-region}, and \ref{real-region}, we deduce that $\zeta_K(s)$ has at most one zero in the region:
\[
\Re s \ge 1-\frac1{12.74 \log d_K},\ |\Im s| \le 1.
\]
Moreover, it follows from Section \ref{real-region} that this zero, if it exists, is real and simple. 
This completes the proof of \myref{zfrD2} of Theorem \ref{ZFR}.

\end{document}